\def\R{\mathbb R}
\def\N{\mathbb N}
\def\const{\text{\upshape Const.}}
\def\negqquad{\!\!\!\!\!\!\!\!}
\def\eps{\varepsilon}
\newcommand{\cal}[1]{{\mathcal #1}}
\newcommand{\assref}[1]{{\rm (Q\ref{#1})}}
\newcommand{\fassref}[1]{{\rm (F\ref{#1})}}
\newcommand{\numera}{\renewcommand{\labelenumi}{(Q\arabic{enumi})}
\begin{enumerate}\setcounter{enumi}{\value{assumption}}}
\newcommand{\finenumera}[1]{\end{enumerate}\addtocounter{assumption}{#1}}
\newcounter{assumption}
\newcommand{\numeraf}{\renewcommand{\labelenumi}{(F\arabic{enumi})}
\begin{enumerate}\setcounter{enumi}{\value{assumptionf}}}
\newcommand{\finenumeraf}[1]{\end{enumerate}\addtocounter{assumptionf}{#1}}
\newcounter{assumptionf}
\newtheorem{thm}{Theorem}
\newtheorem{lem}{Lemma}
\theoremstyle{definition}
\newtheorem{definition}{Definition}
\theoremstyle{remark}
\newtheorem{rem}{Remark}
\begin{document}
\title[Blow--up for the wave equation with...]
{Blow--up for the wave equation with nonlinear source and boundary
damping terms}
\author{Alessio Fiscella}
\author{Enzo Vitillaro}
\address[E.~Vitillaro]
       {Dipartimento di Matematica ed Informatica, Universit\`a di Perugia\\
       Via Vanvitelli,1 06123 Perugia ITALY}
\email{enzo.vitillaro@unipg.it}
\address[A.~Fiscella]{Dipartimento di Matematica ``Federigo Enriques'', Universit\`a di Milano\\
Via Cesare Saldini, 50  20133 Milano ITALY}
\email{alessio.fiscella@unimi.it}
\date{\today}
\begin{abstract}
The paper deals with  blow--up  for the solutions of an evolution
problem consisting on a semilinear  wave equation posed in a bounded
$C^{1,1}$ open subset of $\R^n$, supplied with a Neumann boundary
condition involving a nonlinear dissipation.
 The typical problem studied is
$$\begin{cases} u_{tt}-\Delta u=|u|^{p-2}u \qquad &\text{in
$[0,\infty)\times\Omega$,}\\
 u=0\qquad &\text{on
$(0,\infty)\times\Gamma_0$,}\\ \partial_\nu u
=-\alpha(x)\left(|u_t|^{m-2}u_t+\beta |u_t|^{\mu-2}u_t\right) \qquad
&\text{on
$(0,\infty)\times\Gamma_1$,}\\
 u(0,x)=u_0(x),\qquad u_t(0,x)=u_1(x)&\text{in
$\Omega$,}
\end{cases}$$
where  $\partial\Omega=\Gamma_0\cup\Gamma_1$, $\Gamma_0\cap
\Gamma_1=\emptyset$, $\sigma(\Gamma_0)>0$, $2<p\le 2(n-1)/(n-2)$
(when $n\ge 3$), $m>1$, $\alpha\in L^\infty(\Gamma_1)$, $\alpha\ge
0$, $\beta\ge 0$. The initial data are posed in the energy space.
The aim of the paper is to improve  previous blow--up results
concerning the problem.
\end{abstract}

\maketitle

\section{Introduction}
We deal with the evolution problem consisting on a semilinear wave
equation posed in a bounded subset of  $\R^n$, supplied with a
Neumann boundary condition involving a nonlinear dissipation. More
precisely we consider the initial--and--boundary value problem
\begin{equation}\label{P}
\begin{cases} u_{tt}-\Delta u=f(x,u) \qquad &\text{in
$(0,\infty)\times\Omega$,}\\
 u=0\qquad &\text{on
$(0,\infty)\times\Gamma_0$,}\\
\partial_\nu u=-Q(x,u_t) \qquad &\text{on
$(0,\infty)\times\Gamma_1$,}\\
 u(0,x)=u_0(x),\qquad u_t(0,x)=u_1(x)&
 \text{in $\Omega$,}
\end{cases}\end{equation}
where $u=u(t,x)$, $t\ge 0$, $x\in\Omega$, $\Delta$ denotes the
Laplacian operator, with respect to the $x$ variable. We assume that
$\Omega$ is a bounded and $C^{1,1}$ open subset of $\R^n$ ($n\ge
1$), $\partial\Omega=\Gamma_{0}\cup\Gamma_{1}$,
$\Gamma_{0}\cap\Gamma_{1}=\emptyset$ with $\Gamma_0$ and $\Gamma_1$
being misurable with respect to the natural (Lebesgue) measure on
the manifold $\Gamma=\partial\Omega$, in the sequel denoted by
$\sigma$, and $\sigma(\Gamma_0)>0$. These properties of $\Omega$,
$\Gamma_0$ and $\Gamma_1$ are assumed, without further comments,
throughout the paper. The initial data are in the energy space, that
is $u_0\in H^1(\Omega)$ and $u_1\in L^2(\Omega)$, with the
compatibility condition ${u_0}_{|\Gamma_0}=0$ (in the trace sense).

Moreover $Q$ represents a nonlinear boundary damping and, roughly,
$Q(x,v)\simeq \alpha(x)(|v|^{m-2}v+\beta |v|^{\mu-2}v)$, $1<\mu\le
m$, $\beta\ge 0$, $\alpha\in L^\infty(\Gamma_1)$, $\alpha\ge 0$.
When $\beta>0$ and $\mu=2$  the term $Q$ describes a realistic
dissipation rate, linear for small $v$ and superlinear for large $v$
(see for example \cite{levicivita}), possibly depending on the space
variable, while when $\beta=0$ and $\alpha=1$ it is a pure--power
model nonlinearity. Finally $f$ is a nonlinear source and roughly
$f(x,u)\simeq |u|^{p-2}u$, $2<p\le 2^*$, where as usual $2^*$ denotes
the Sobolev critical exponent $2n/(n-2)$ when $n\ge 3$, $2^*=\infty$
when $n=1,2$.

The presence of the boundary damping in \eqref{P} plays a critical
role in the context of boundary control. See for example
\cite{chen2}, \cite{chen4}, \cite{chen3}, \cite{chen1},
\cite{lagnese2}, \cite{lasiecka1}, \cite{lastat},
\cite{lasieckatriggiani} and \cite{zuazua}. For this reason, and for
their clear physical meaning, problems like \eqref{P} are subject of
a wide literature. In addition to the already quoted papers see also
\cite{CDCL}, \cite{CDCM}, \cite{cavalcantisoriano},
\cite{chueshovellerlasiecka}, \cite{CLT},  \cite{gerbi}, \cite{Ha},
\cite{lasieckatriggiani2}, \cite{lasieckatriggiani3}, \cite{phy},
\cite{tataru} and \cite{stable}.

The analysis of problems like \eqref{P} is related to the treatment
of quasilinear wave equations  with Neumann boundary conditions involving source terms.
See \cite{bociulasiecka2}, \cite{bociulasiecka1}, \cite{BRT},
\cite{bociulasiecakproc}, \cite{PH} and \cite{global}.

In order to clearly describe the specific subject of this paper we
consider problem \eqref{P} when $f$ and $Q$ are exactly the model
nonlinearities, that is when problem \eqref{P} reduces to
\begin{equation}\label{2}
\begin{cases} u_{tt}-\Delta u=|u|^{p-2}u \qquad &\text{in
$(0,\infty)\times\Omega$,}\\
 u=0\qquad &\text{on
$(0,\infty)\times\Gamma_0$,}\\
\partial_\nu u=-\alpha(x) (|u_t|^{m-2}u_t+\beta |u_t|^{\mu-2}u_t) \qquad &\text{on
$(0,\infty)\times\Gamma_1$,}\\
 u(0,x)=u_0(x),\qquad u_t(0,x)=u_1(x)&
 \text{in $\Omega$,}
\end{cases}\end{equation}
with  $1<\mu\le m$, $\beta\ge 0$, $\alpha\in L^\infty(\Gamma_1)$,
$\alpha\ge 0$ and $2<p\le 2^*$.

Local existence and uniqueness for weak solutions of problem
\eqref{2} when $2<p\le 1+2^*/2$ was first proved in
\cite[Theorem~4]{stable}, see Theorem~\ref{localexistencetheorem},
p.~\pageref{localexistencetheorem}. In the literature one often
refer to this parameter range as the subcritical/critical one, since
the Nemitskii operator $u\mapsto |u|^{p-2}u$ is locally Lipschitz
from $H^1(\Omega)$ to $L^2(\Omega)$. In this case the nonlinear
semigroup theory is directly available.

The quoted result was
subsequently extended to more general nonlinearities $Q$ and $f$, of
non--algebraic type, in \cite{CDCL} and \cite{CDCM}. Moreover, at
least when $\alpha$ is constant, H\"{a}damard well--posedness for
problem \eqref{2} follows from the results in \cite{bociulasiecka1},
dealing with more general versions of problem \eqref{P} possibly
involving internal nonlinear damping and boundary source terms. On
this concern it is worth observing that, when no internal damping is
present in the equation, the well--posedness result in
\cite{bociulasiecka1} only applies to the subcritical/critical range
$2<p\le 1+2^*/2$, due to \cite[Assumption 1.1]{bociulasiecka1}.
Moreover, when $u_0$ and $u_1$ are small (in the energy space) the
solutions of \eqref{2} are global in time.

On the other hand blow--up results for problem \eqref{2} are much
less frequent in the literature. In the particular case
$\Gamma_1=\emptyset$ (the same arguments work also when
$\alpha\equiv 0$) it is well--known that, for particularly chosen
data,  local solutions of problem \eqref{2}, when they exist,
blow--up in finite time. See for example \cite{ball3},
\cite{glassey}, \cite{jorgens}, \cite{keller}, \cite{klp},
\cite{levine4}, \cite{levine2} and  \cite{tsutsumi}. We also refer
to the related papers \cite{levpayne1} and \cite{levpayne2}, dealing
with boundary source terms. In \cite{paynesattinger} the authors
introduced the so called ``potential well theory" for semilinear
wave equation with Dirichlet boundary condition, and in particular
blow--up for positive initial energy was proved. We also would like
to mention the paper \cite{georgiev}, dealing with the equation
 $u_{tt}-\Delta u+|u_t|^{m-2}u_t=|u|^{p-2}u$ in
$[0,\infty)\times\Omega$ with homogeneous Dirichlet boundary
conditions,  when $2<p\le 1+2^*/2$ and $m>1$, which was the first
contribution facing  the competition between nonlinear damping and
source terms. In particular it was there proved that solutions may
blow--up in finite time (depending on initial data) if and only if
$m<p$. The result was subsequently generalized to positive initial
energy and abstract evolution equations in several papers. See for
example \cite{levserr}, \cite{ps:private}  and \cite{blowup}.

When $\Gamma_1\not=\emptyset$ and $m=2$ the problem of global
nonexistence for solutions of \eqref{2} was studied in
\cite{rendiconti} using the classical concavity method of H. Levine,
which is no longer available for nonlinear damping terms. The first
blow--up result for problem \eqref{2}  in the general case $m>1$
(and $2<p\le 1+2^*/2$) is contained in the already quoted paper
\cite{stable}. To recall it we need to introduce some basic
notation.  We denote by  $\|\cdot\|_p$  the norm in $L^p(\Omega)$ as
well as the norm in  $[L^p(\Omega)]^n$.  We also introduce the
Hilbert space
$$H^1_{\Gamma_0}(\Omega)=\{u\in H^1(\Omega): u_{|\Gamma_0}=0\}$$
(where $u_{|\Gamma_0}$ is intended in the trace sense), equipped
with the norm $\|\nabla u\|_2$, which is equivalent, by a Poincar\`e
type inequality (see \cite{ziemer}), to the standard one. We also
introduce the functionals
\begin{equation}\label{J}
J(u)=\frac 12 \|\nabla u\|_2^2-\frac
1p\|u\|_p^p\qquad\text{and}\quad K(u)=\|\nabla u\|_2^2-\|u\|_p^p
\end{equation}
for $u\in H^1_{\Gamma_0}(\Omega)$. The energy associated to initial
data $u_0\in H^1_{\Gamma_0}(\Omega)$ and $u_1\in L^2(\Omega)$ is
denoted by $E(u_0,u_1):=\frac 12 \|u_1\|_2^2+J(u_0)$. Moreover we
set
\begin{equation}\label{d}
d =\underset{u\in
H_{\Gamma_0}^{1}(\Omega)\setminus\{0\}}{\inf}\sup\limits_{\lambda
>0}J(\lambda u).
\end{equation}
It is well--known that $d>0$. See Section~\ref{section4}, where
Lemma~\ref{NUOVO} makes clear this property, and also
Remark~\ref{variational}, where a variational characterizations of
$d$ is recalled. Finally  we introduce the
"bad part of the potential well" (we owe this suggesting name to \cite{BRTCOMP})
\begin{equation}\label{Wu}
W_u:=\{(u_0,u_1)\in H^1_{\Gamma_0}(\Omega)\times L^2(\Omega):
K(u_0)\le 0 \quad\text{and} \quad E(u_0,u_1)<d\}.
\end{equation}
Trivially if $E(u_0,u_1)<0$ then  $(u_0,u_1)\in W_u$ since $p>2$.
The situation is clearly described by  Figure~\ref{fig1}  below.

In particular \cite[Theorem~7]{stable} asserts that solutions
blow--up in finite time  if $(u_0,u_1)\in W_u$  and the further
condition
\begin{equation}\label{3} m<m_0(p):=\frac {2(n+1)p-4(n-1)}{n(p-2)+4}
\end{equation}
holds. It is worth mentioning that $m_0(p)>2$ when $p>2$, so the
case $1<m\le 2$ is fully covered, but when $m>2$ condition \eqref{3}
is rather restrictive. See Figure~\ref{fig2} below.

\begin{figure}
\includegraphics[width=17cm]{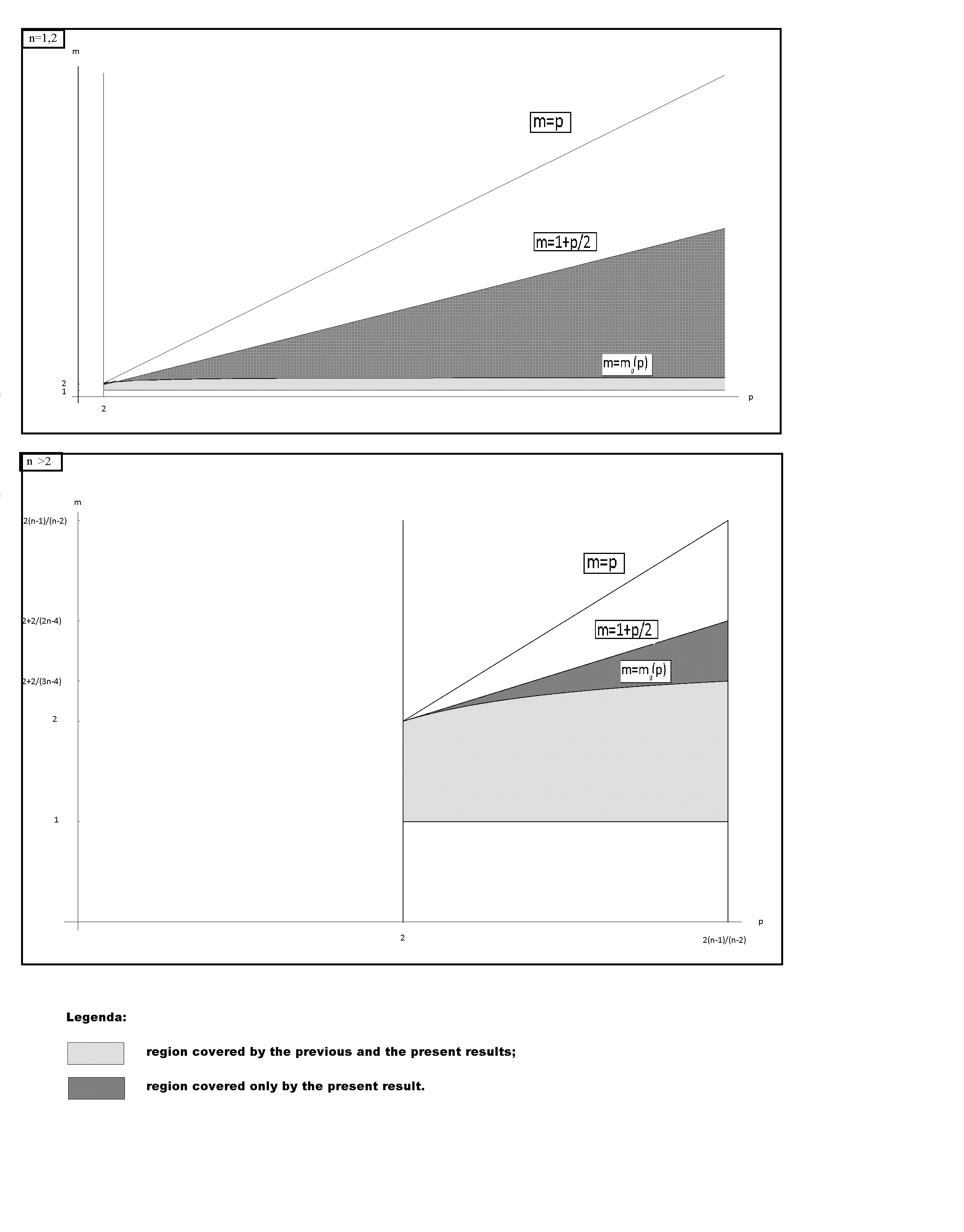}
\caption{The sets of
the $(p,m)$ considered in \cite{stable} and in the present paper, in
the two cases $n=1,2$ and when $n\ge 3$. The figure are made when
$n=2$ and $n=3$ in different scales due to the unboundedness of the
sets considered in the first case.} \label{fig2}
\end{figure}

In  \cite{BRT} and  \cite{CDCL} (also) the blow--up problem is considered.
These papers deal with a modified version of \eqref{2}, where also
internal damping and boundary source terms are present. Assumption
\eqref{3} is absent there, since the combination of internal and
boundary source is more effective in producing blow--up.

As to problem \eqref{2} without boundary sources we mention the
paper \cite{gerbi} where exponential growth, but non blow--up, for
solutions of \eqref{2} is proved when $m<p$. A generalized version
of assumption \eqref{3} also appears in the recent paper
\cite{autuoripucci}, dealing with much more general Kirchhoff
systems and a larger class of initial data.

Assumption \eqref{3} was first skipped in
\cite{bociulasieckaApplmat}, where blow--up for a modified version
of problem \eqref{2} is proved when $m<1+p/2$ and $E(u_0,u_1)<0$.
Even if the blow--up result in the quoted paper is stated in
presence of an internal damping, one easily sees that the arguments
in the proof apply as well to problem \eqref{2}. Clearly assumption
$m<1+p/2$ is more general than \eqref{3}, since $m_0(p)<1+p/2$ for
$p>2$  (see Figure~\ref{fig2} again). The improvement in the
assumption was obtained by using interpolation estimate in the full
scale of Besov spaces instead than in the Hilbert scale used in
\cite{stable}.

\begin{figure}
\includegraphics[width=12cm]{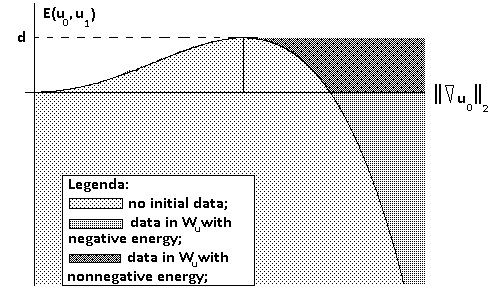}
\caption{The sets of initial data considered by
\cite{bociulasieckaApplmat}, having negative initial energy, and
those considered only in the present paper.} \label{fig1}
\end{figure}

Subsequently  assumption \eqref{3} was skipped also in the recent
paper \cite{FLZ}, dealing with the one--dimensional case $n=1$, when
$\beta=0$ and $\alpha\equiv 1$. Blow--up for problem \eqref{2} is
proved there when $E(u_0,u_1)<0$ and
\renewcommand{\labelenumi}{(\roman{enumi})}
\begin{enumerate} \item  either $m<1+p/2$,
\item or $m\ge 1+p/2$ and  $|\Omega|$ is sufficiently large.
\end{enumerate}
The arguments used by the authors in the two cases are different.
Consequently in dimension one the line $m=p$ is not the threshold
between global existence and blow--up for suitable data. A natural
conjecture is then that the same phenomenon occurs in higher space
dimension $n$, even if the one--dimensional case is sometimes
different from the higher--dimensional one (see for example the
papers \cite{vazvit2} and \cite{vazvit} where a similar situation
occurs for well--posedness, and the related paper \cite{vazvitHLB}).
Unfortunately the arguments used to handle with the case $m\ge
1+p/2$ cannot be adapted to $n\ge 2$.

The aim of this paper is to show that the technique in \cite{stable}
can be adapted to cover at least the case $m<1+p/2$. In this way we
extend the blow--up result from \cite{bociulasieckaApplmat} to
positive initial energy.  Instead of using interpolation theory we
adapt a more elementary estimate, used in \cite{FLZ} when $n=1$, to
the case $n\ge 1$.

Our main result concerning problem \eqref{2} is the following one.

\begin{thm}\label{theorem1} Let $\alpha\in L^\infty(\Gamma_1)$,
$\alpha\ge 0$, $\beta\ge 0$,
$$2<p\le 1+2^*/2,\qquad 1<m<1+p/2$$ and
$(u_0,u_1)\in W_u$. Then the weak solution $u$ of problem \eqref{2}
blows--up in finite time, that is  there is $T_{max}<\infty$ such
that $\|u(t)\|_p\to \infty$ (and so also $\|u(t)\|_\infty\to \infty$
and $\|\nabla u(t)\|_2\to \infty$) as $t\to T_{max}^-$.
\end{thm}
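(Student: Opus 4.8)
The plan is to adapt the potential--well argument of \cite{stable}, the only genuinely new ingredient being an elementary trace estimate replacing the Besov--scale interpolation. Throughout I work on the maximal existence interval $[0,T_{max})$ of the weak solution given by Theorem~\ref{localexistencetheorem}, and I abbreviate $E(t):=\tfrac12\|u_t(t)\|_2^2+J(u(t))$ and $H(t):=d-E(t)$ (the computations below being justified for weak solutions through the energy identity and a density argument). First I would establish the invariance of $W_u$. Since the energy identity gives $E'(t)=-\int_{\Gamma_1}\alpha(|u_t|^m+\beta|u_t|^\mu)\,d\sigma\le 0$, we have $E(t)\le E(0)<d$, hence $H(t)\ge H_0:=d-E(0)>0$; moreover $K(u(t))$ cannot attain a nonnegative value with $u(t)\ne 0$, since crossing the Nehari set would force $J(u(t))\ge d>E(t)\ge J(u(t))$. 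Thus $K(u(t))<0$ throughout, and scaling $u(t)$ to the Nehari set (using the variational characterisation of $d$, Remark~\ref{variational} and Lemma~\ref{NUOVO}) yields the coercivity bound $\big(1-\tfrac2p\big)\|u(t)\|_p^p\ge 2d$. The same facts give the complementary upper bound $H(t)\le\tfrac12\|u(t)\|_p^p$.

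Following \cite{stable} I set $\cal L(t):=H(t)^{1-\theta}+\eps\int_\Omega u(t)u_t(t)\,dx$, with $\theta\in(0,1)$ and $\eps>0$ small to be fixed, and compute, using the equation, the boundary condition and $-K(u)=\big(1-\tfrac2p\big)\|u\|_p^p+\|u_t\|_2^2+2H-2d$,
\[
\cal L'(t)=(1-\theta)H^{-\theta}H'+\eps\Big[2\|u_t\|_2^2+\big(1-\tfrac2p\big)\|u\|_p^p+2H-2d\Big]-\eps\int_{\Gamma_1}\alpha\,u\big(|u_t|^{m-2}u_t+\beta|u_t|^{\mu-2}u_t\big)\,d\sigma.
\]
The heart of the matter is the boundary term. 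Applying the divergence theorem to $|u|^m$ gives $\int_{\Gamma_1}|u|^m\,d\sigma\le C\int_\Omega|u|^{m-1}|\nabla u|\,dx+C\int_\Omega|u|^m\,dx$, and H\"older together with $2(m-1)<p$ and $\|\nabla u\|_2^2\le\|u\|_p^p$ yields
\[
\int_{\Gamma_1}|u|^m\,d\sigma\le C\|u\|_p^{\,p/2+m-1}+C\|u\|_p^{\,m}\le C\|u\|_p^{\,p/2+m-1}.
\]
This is precisely where $1<m<1+p/2$ enters: it forces $p/2+m-1<p$; the same bound holds a fortiori with $m$ replaced by $\mu\le m$.

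Closing the inequality is the step I expect to be the main obstacle, since one must simultaneously absorb the nonlinear boundary damping and keep a \emph{positive} coefficient of $\|u\|_p^p$ against the constant $-2d$, uniformly for data with $E(0)$ arbitrarily close to $d$. I estimate the boundary term by Young's inequality with a time--dependent weight $\delta(t)$ chosen so that the $|u_t|^m$--contribution is absorbed into $\tfrac12(1-\theta)H^{-\theta}H'$ (via $\int_{\Gamma_1}\alpha|u_t|^m\le H'$); the conjugate $|u|^m$--contribution then carries a factor $\eps^m H^{\theta(m-1)}$, which by the trace estimate and $H\le\tfrac12\|u\|_p^p$ is bounded by $C\eps^m\|u\|_p^{\,p/2+m-1+p\theta(m-1)}$. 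Choosing $\theta$ small (so that this exponent is $\le p$, possible exactly because $p/2-(m-1)>0$) and then $\eps$ small, this term is absorbed into a small fraction $\lambda\eps\big(1-\tfrac2p\big)\|u\|_p^p$. For the surviving bracket, writing $X:=\big(1-\tfrac2p\big)\|u\|_p^p\ge 2d$ and using $H\ge H_0>0$, one checks that for $\lambda$ and a retained coefficient $c_2$ chosen small in terms of $H_0/d$,
\[
2\|u_t\|_2^2+(1-\lambda)X+2H-2d\ \ge\ \|u_t\|_2^2+c_2\,X+H,
\]
so that all three coefficients are positive; it is the strict inequality $E(0)<d$ (i.e. $H_0>0$) that keeps $c_2>0$.

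Combining the above gives $\cal L'(t)\ge\xi\big(H+\|u_t\|_2^2+\|u\|_p^p\big)$ for some $\xi>0$. On the other hand, imposing also $\theta\le\tfrac{p-2}{2p}$ one bounds $\big|\int_\Omega u u_t\big|^{1/(1-\theta)}\le C(\|u\|_p^p+\|u_t\|_2^2)$, whence $\cal L^{1/(1-\theta)}\le C\big(H+\|u\|_p^p+\|u_t\|_2^2\big)$. Therefore $\cal L'\ge\tilde\xi\,\cal L^{1/(1-\theta)}$ with exponent $1/(1-\theta)>1$; fixing $\eps$ small enough that $\cal L(0)>0$ and integrating forces $\cal L(t)\to\infty$ at some finite $T^*$, so that $T_{max}\le T^*<\infty$. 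Finally, since $H\le\tfrac12\|u\|_p^p$ and $\|u_t\|_2^2\le 2d+\tfrac2p\|u\|_p^p$, one has $\cal L\le C\|u\|_p^p$ for large values, so $\|u(t)\|_p\to\infty$ as $t\to T_{max}^-$; the remaining divergences then follow from $\|u\|_p\le|\Omega|^{1/p}\|u\|_\infty$ and the Sobolev inequality $\|u\|_p\le C\|\nabla u\|_2$.
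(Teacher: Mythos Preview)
Your argument is correct and follows the same route as the paper's proof of Theorem~\ref{theorem 4} (to which Theorem~\ref{theorem1} is reduced via Lemma~\ref{NUOVO}): the auxiliary functional $\cal H^{1-\eta}+\xi\int_\Omega uu_t$, the elementary trace bound $\|u\|_{m,\Gamma_1}^m\le C\|u\|_p^{m-1}\|\nabla u\|_2$ (the paper's Lemma~\ref{lemma2}, proved there via an auxiliary Neumann problem, which is exactly what your ``divergence theorem'' step amounts to), a Young--inequality absorption of the boundary damping into $\cal H^{-\eta}\cal H'$, and the resulting super--linear ODI.

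Two minor points where the paper's execution is cleaner than yours. First, the paper sets $\cal H=E_2-E$ with a fixed $E_2\in(E(0),d)$ rather than $d-E$; using the multiplier $p-\eps$ instead of $2$ then makes the constant term $-(p-\eps)E_2$ strictly dominated by $\tfrac12(p-\eps-2)\lambda_1^2$ (via $\|\nabla u\|_2\ge\lambda_2>\lambda_1$ from Lemma~\ref{lemma1}), so the delicate balancing of $-2d$ against $X\ge 2d$ and $H\ge H_0$ that you flag as ``the main obstacle'' never arises. Second, your last step has a small gap: from the ODI you only know $\cal L\to\infty$ at $T^*$, not at $T_{\max}$, so if $T_{\max}<T^*$ the bound $\cal L\le C\|u\|_p^p$ does not by itself give $\|u(t)\|_p\to\infty$. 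The paper derives $\|u(t)\|_p\to\infty$ instead from the continuation alternative (Theorem~\ref{continuation}) combined with the energy bound $\tfrac12\|\nabla u\|_2^2+\tfrac12\|u_t\|_2^2\le E(0)+\tfrac1p\|u\|_p^p$.
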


\begin{rem} The meaning of weak solutions will be made precise in the sequel. Moreover, it will be clear
(after the proof) that
the parameter range $2<p\le 1+2^*/2$ in Theorem~\ref{theorem1} can
be extended to $2<p\le 2^*$, but when $1+2^*/2<p\le 2^*$ we merely
obtain global nonexistence of weak solutions, since a local
existence theorem is missing.
\end{rem}

 The paper is organized as follows. In Section~\ref{section2}  we
recall (from \cite{stable}) our main assumptions,  local existence
and potential-- well theories for problem \eqref{P}, with some
additional remarks.  Section~\ref{section3} is devoted to state and
prove our main result, that is Theorem~\ref{theorem 4}, on  problem
\eqref{P}. In Section~\ref{section4} we show that, when applying
Theorem~\ref{theorem 4} to problem~\eqref{2}, we obtain
Theorem~\ref{theorem1}.
\bigskip

\section{Preliminaries}\label{section2}
\bigskip
\noindent In this section we recall some material from
\cite{stable}, referring to the quoted paper for most of the proofs.
We start by recalling the assumptions on $Q$ and $f$ needed for
local existence. \numera
\item \label{Q1} $Q$ is a Carath\'eodory real function in
$\Gamma_1\times\R$, and there are  $\alpha\in L^1(\Gamma_1)$,
$\alpha \ge 0$
\begin{footnote}{the integrability of $\alpha$ on $\Gamma_1$, although not explicitly assumed in \cite[Theorem~4]{stable}, was tacitely used there.}\end{footnote},
 and an exponent $m>1$ such that, if $m\ge 2$,
$$\left(Q(x,v)-Q(x,w)\right)(v-w)\ge \alpha(x)|v-w|^m$$ for all
$x\in\Gamma_1$, $v,w\in\R$, while, if $1<m<2$,
$$\left(Q(x,v)-Q(x,w)\right)(v-w)\ge \alpha(x)
\left||v|^{m-2}v-|w|^{m-2}w\right|^{m'}$$ for all $x\in\Gamma_1$,
$v,w\in\R$, where $1/m+1/m'=1$;

\item \label{Q2} there are $1<\mu\le m$ and $c_1>0$ such that
$$|Q(x,v)|\le c_1\alpha(x)\left(|v|^{\mu-1}+|v|^{m-1}\right)$$ for all
$x\in\Gamma_1$,  $v\in\R$.

\finenumera 2

\begin{rem}\label{remnew1} The model nonlinearity
\begin{equation} \label{Q0}
Q_0(x,v)=\alpha(x)\left(|v|^{\mu-2}v+|v|^{m-2}v\right),\qquad
1<\mu\le m,\quad \alpha\ge 0,\quad \alpha\in L^1(\Gamma_1),
\end{equation}
satisfies \assref{Q1} and \assref{Q2}. Indeed, while \assref{Q2} is
trivially verified, assumption \assref{Q1} holds, when $m\ge2$, up
to multiplying $\alpha$ by an inessential positive constant, due to
the elementary inequality
\begin{equation}\label{elementary2}
(|v|^{m-2}v-|w|^{m-2}w)(v-w)\ge \const |v-w|^m,\qquad v,w\in\R.
\begin{footnote}{which is a consequence of the boundedness of the
real function $(|t-1|^{m-2}(t-1))/(|t|^{m-2}t-1)$ when $m\ge 2$.
}\end{footnote}
\end{equation}
When $1<m<2$ we get \assref{Q1} by applying \eqref{elementary2} to
$m'>2$, $|v|^{m-2}v$ and $|w|^{m-2}w$.
\end{rem}

We  note, for a future use, some consequences of
\assref{Q1}--\assref{Q2}. First of all it follows that
\begin{equation}\label{low}
Q(x,v)v\ge \alpha (x)|v|^m
\end{equation}
for all $x\in\Gamma_1$, $v\in\R$. Moreover $Q(x,\cdot)$ is
increasing for all $x\in\Gamma_1$, and  $Q(\cdot,0)\equiv 0$. Then,
after  setting
\begin{equation}\label{PHI}
\Phi(x,u)=\int_0^uQ(x,s)\,ds,
\end{equation}
we obtain
\begin{equation}\label{philow}
\Phi(x,u)\ge \frac {\alpha(x)}m |v|^m\qquad\text{for all $x\in
\Gamma_1$, $v\in\R$}. \end{equation}

We now introduce some notation. When $1<q\le \infty$ we denote by
$L^q(\Gamma,\alpha)$ the $L^q$ space on $\Gamma$ associated to the
measure $\mu_\alpha$ defined by
$\mu_\alpha(A)=\int_A\alpha(x)\,d\sigma $ for any measurable subset
$A$ of $\Gamma$, while  $L^q(\Gamma)$ denotes the standard $L^q$
space, that is $L^q(\Gamma)=L^q(\Gamma,1)$. The analogous convention
will be adopted on $\Gamma_1$ and in $(0,T)\times\Gamma_1$ for $T>0$
(in the latter case the measure $\mu_\alpha$ being replaced by
$dt\times\mu_\alpha$). Moreover we shall write for simplicity
\begin{alignat*}2
&\|\cdot\|_{q,\Gamma,\alpha}:=\|\cdot\|_{L^q(\Gamma,\alpha)},\qquad
&&\|\cdot\|_{q,\Gamma}:=\|\cdot\|_{L^q(\Gamma)},\\
&\|\cdot\|_{q,\Gamma_1,\alpha}:=\|\cdot\|_{L^q(\Gamma_1,\alpha)},\qquad
&&\|\cdot\|_{q,\Gamma_1}:=\|\cdot\|_{L^q(\Gamma_1)}.
\end{alignat*}
Our assumption concerning $f$ is the following one: \numeraf
\item \label{F1} $f$ is a Carath\'eodory real function in
$\Omega\times\R$, $f(x,0)=0$ and there are $p>2$ and $c_2>0$ such
that
$$|f(x,u)-f(x,v)|\le c_2 |u-v|(1+|u|^{p-2}+|v|^{p-2})$$
for all $x\in\Omega$, $u,v\in\R$.

\finenumeraf 1

\begin{rem}\label{remnew2}
The model nonlinearity
\begin{equation}\label{f0}
f_0(x,u)=a|u|^{q-2}u+b|u|^{p-2}u,\qquad 2\le q<p,\qquad a,b\in\R,
\end{equation}
satisfies \fassref{F1}, due to the elementary inequality
$$\big||u|^{s-2}u-|v|^{s-2}v\big|\le \const |v-w|(1+|u|^{s-2}+|v|^{s-2}),\qquad u,v\in\R,$$
which holds for $s\ge 2$.
\end{rem}
We make precise the definition of weak solution used (somewhat
implicitly) in \cite{stable}.
\begin{definition}\label{def1}
When \assref{Q1},\assref{Q2}, \fassref{F1} hold and $2<p\le 2^*$ we
say that $u$ is a weak solution of problem \eqref{P} in $[0,T]$,
$T>0$, if
\renewcommand{\labelenumi}{(\alph{enumi})}
\begin{enumerate}
\item $u\in C([0,T];H^{1}_{\Gamma_{0}}(\Omega))\cap C^1([0,T];L^{2}(\Omega));$

\item the spatial trace of $u$ on
$(0,T)\times\Gamma$ (which exists by the trace theorem) has
a distributional time derivative on $(0,T)\times\Gamma_{1}$,
belonging to \mbox{$L^{m}((0,T)\times\Gamma_{1},\alpha)$};

\item for all $\varphi\in C([0,T];H^{1}_{\Gamma_{0}}(\Omega))\cap C^{1}([0,T];L^{2}(\Omega))\cap L^{m}((0,T)\times\Gamma_1,\alpha )$ and for almost all $t\in [0,T]$ the
distribution identity
\begin{equation}\label{ovo}
\int_\Omega u_t\varphi\Big|_0^t=\int_0^t\int_\Omega
u_t\varphi_t-\nabla u\nabla\varphi +\int_0^t\int_{\Omega}
f(\cdot,u)\varphi-\int_0^t\int_{\Gamma_1} Q(\cdot,u_t)\varphi
\end{equation}
 holds true;
\item $u(0)=u_{0}$ and $u_{t}(0)=u_{1}$.
\end{enumerate}
We say that $u$ is a weak solution of problem \eqref{P} in $[0,T)$
if $u$ is a weak solution in $[0,T']$ for all $T'\in (0,T)$. Finally
we say that a weak solution $u$ in $[0,T)$ is  maximal if $u$ cannot
be seen as a restriction of a weak solution in $[0,T')$, $T<T'$.
\end{definition}

\begin{rem} The term $\int_0^t\int_{\Omega}
f(\cdot,u)\varphi$ in \eqref{ovo} makes sense by \fassref{F1}, the
continuity of Nemitski operators and Sobolev embedding theorem. To
recognize that  the last term in the right--hand side of \eqref{ovo}
makes sense requires some attention. At first we note that, by (b),
we have $\alpha^{1/m}u_t\in L^m((0,T)\times\Gamma_1)$ and then
$\alpha^{1/m'}|u_t|^{m-1}\in L^{m'}((0,T)\times\Gamma_1)$. Since
$\varphi \in L^{m}((0,T)\times\Gamma_1,\alpha )$ we have
$\alpha^{1/m}\varphi \in L^{m}((0,T)\times\Gamma_1)$. Consequently
$\alpha |u_t|^{m-1}\varphi\in L^1((0,T)\times\Gamma_1)$. Now, since
$\mu_\alpha(\Gamma_1)<\infty$ and $\mu\le m$, we have
$L^m((0,T)\times\Gamma_1,\alpha)\subset
L^{\mu}((0,T)\times\Gamma_1,\alpha)$, hence we can repeat previous
arguments with $\mu$ instead of $m$ to show that  $\alpha
|u_t|^{\mu-1}\varphi\in L^1((0,T)\times\Gamma_1)$. Consequently, by
\assref{Q2} we get $Q(\cdot,u_t)\varphi\in
L^1((0,T)\times\Gamma_1)$.
\end{rem}

\begin{rem} \label{shift} We remark, for the sake of clearness, the following facts.
Since the equation and boundary conditions in problem \eqref{P} are
autonomous, the choice of the initial time as zero is purely
conventional. Consequently, for any $a\in\R$, we shall speak of weak
solutions in $[a,a+T]$, $T>0$, of the problem
\begin{equation}\label{Pa}
\begin{cases} u_{tt}-\Delta u=f(x,u) \qquad &\text{in
$(a,\infty)\times\Omega$,}\\
 u=0\qquad &\text{on
$(a,\infty)\times\Gamma_0$,}\\
\partial_\nu u=-Q(x,u_t) \qquad &\text{on
$(a,\infty)\times\Gamma_1$,}\\
 u(a,x)=u_0(x),\qquad u_t(a,x)=u_1(x)&
 \text{in $\Omega$,}
\end{cases}\end{equation}
when (a--d) in Definition~\ref{def1} hold true with $0$ and $T$
respectively replaced by $a$ and $a+T$. Moreover
\renewcommand{\labelenumi}{\roman{enumi})}
\begin{enumerate}
  \item the function $u$ is a weak
solution of \eqref{P} in $[0,T]$ if and only if the time shifted
function $\tau_a u$ defined by
\begin{equation}\label{ua}
    (\tau_a u)(t):=u(t-a)
\end{equation}
 is a weak solution of \eqref{Pa} in $[a,a+T]$;
  \item let $b\in \R$, $0<T_1<T_2$, $u_1$ be a weak
solution in $[b,b+T_1]$ of problem \eqref{Pa} with $a=b$ and $u_2$
be a weak solution in $[b+T_1,b+T_2]$ of problem \eqref{Pa} with
$a=b+T_1$. Define $u$ in $[b,b+T_2]$ by $u(t)=u_1(t)$ for $t\in
[b,b+T_1]$ and $u(t)=u_2(t)$ for $t\in (b+T_1,b+T_2]$. Then $u$ is a
weak solution of \eqref{Pa} with $a=b$ in $[b,b+T_2]$ if and only if
$u_1(b+T_1)=u_2(b+T_1)$ and $(u_1)_t(b+T_1)=(u_2)_t(b+T_1)$.
\end{enumerate}
\end{rem}

We now recall \cite[Theorem~4]{stable}.

\begin{thm}\label{localexistencetheorem} Suppose that \assref{Q1}--\assref{Q2} and
\fassref{F1} hold, that $2<p\le 1+2^*/2$, and $u_0\in
H^1_{\Gamma_0}(\Omega)$, $u_1\in L^2(\Omega)$. Then there is $T>0$
and a unique weak solution of \eqref{P} in $[0,T]$. Moreover $u$
satisfies  the energy identity
\begin{equation}\label{EI}
E(t)-E(s)=-\int_s^t\int_{\Gamma_1}Q(\cdot,u_t)u_t
\end{equation}
for $0\le s\le t$, where
\begin{align}
E(t)=E(u(t),u_t(t))=&\frac 12 \|u_t(t)\|_2^2+\frac 12\|\nabla
u(t)\|_2^2- \int_\Omega F(\cdot,u(t)),\label{eff}\\ \intertext{and}
F(x,s)=&\int_0^s f(x,\tau)\,d\tau\qquad\text{for $x\in\Omega$,
$s\in\R$.}\label{F}
\end{align}
\end{thm}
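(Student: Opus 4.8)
The plan is to recast problem \eqref{P} as an abstract Cauchy problem on the energy space $\mathcal{H}=H^1_{\Gamma_0}(\Omega)\times L^2(\Omega)$, governed by a maximal monotone operator that encodes the wave operator together with the nonlinear boundary damping, and then to treat the source $f$ as a locally Lipschitz perturbation. Writing $U=(u,u_t)$, I introduce the (nonlinear, possibly multivalued) operator $\mathcal{A}$ formally given by $\mathcal{A}(u,v)=(-v,-\Delta u)$, whose domain is chosen so that the constraint $v\in H^1_{\Gamma_0}(\Omega)$ and the boundary condition $\partial_\nu u=-Q(\cdot,v)$ on $\Gamma_1$ are built in. Problem \eqref{P} then reads $U'(t)+\mathcal{A}U(t)\ni \mathcal{F}(U(t))$, $U(0)=(u_0,u_1)$, with $\mathcal{F}(u,v)=(0,f(\cdot,u))$.

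First I would check that $\mathcal{A}$ is monotone: for $(u_i,v_i)\in D(\mathcal{A})$ the bracket $\langle \mathcal{A}(u_1,v_1)-\mathcal{A}(u_2,v_2),(u_1-u_2,v_1-v_2)\rangle_{\mathcal{H}}$ reduces, after integration by parts and use of the boundary condition, to $\int_{\Gamma_1}(Q(\cdot,v_1)-Q(\cdot,v_2))(v_1-v_2)\,d\sigma\ge 0$, which is nonnegative since $Q(x,\cdot)$ is increasing by \assref{Q1}. The crux, and the step I expect to be the main obstacle, is maximality, i.e.\ surjectivity of $I+\mathcal{A}$: given $(g,h)\in\mathcal{H}$ I must solve the resolvent system, which after eliminating $v=u-g$ amounts to the nonlinear elliptic boundary value problem $u-\Delta u=h+g$ in $\Omega$, $u=0$ on $\Gamma_0$, $\partial_\nu u+Q(\cdot,u-g)=0$ on $\Gamma_1$. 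I would solve this variationally, minimizing on $H^1_{\Gamma_0}(\Omega)$ the strictly convex, coercive functional whose boundary part is $\int_{\Gamma_1}\Phi(\cdot,u-g)\,d\sigma$, with $\Phi$ as in \eqref{PHI}; here the trace embeddings $H^1_{\Gamma_0}(\Omega)\hookrightarrow L^q(\Gamma_1)$ together with the growth bound \assref{Q2} and $\alpha\in L^1(\Gamma_1)$ guarantee that the functional is well defined, convex and lower semicontinuous, so a minimizer exists and yields the desired solution. Weak lower semicontinuity and coercivity, the latter monitored through \eqref{philow}, are the delicate points.

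Once $\mathcal{A}$ is maximal monotone it generates a nonlinear contraction semigroup $S(t)$ on $\overline{D(\mathcal{A})}=\mathcal{H}$ by the Komura--Brezis theory. I then observe that the range $2<p\le 1+2^*/2$ is precisely what makes $u\mapsto f(\cdot,u)$ locally Lipschitz from $H^1_{\Gamma_0}(\Omega)$ into $L^2(\Omega)$: by \fassref{F1}, H\"older's inequality and the Sobolev embedding $H^1\hookrightarrow L^{2^*}$, the exponent constraint yields $\|f(\cdot,u)-f(\cdot,v)\|_2\le C(R)\|\nabla(u-v)\|_2$ on balls of radius $R$, so $\mathcal{F}$ is locally Lipschitz on $\mathcal{H}$. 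A standard fixed-point argument on the Duhamel equation $U(t)=S(t)U_0+\int_0^t S(t-s)\mathcal{F}(U(s))\,ds$ in $C([0,T];\mathcal{H})$ --- using the contractivity of $S(t)$ and the local Lipschitz bound --- then produces, for $T$ small, a unique local solution; uniqueness propagates along the existence interval by a Gronwall estimate combining the monotonicity of $\mathcal{A}$ with the Lipschitz constant of $\mathcal{F}$.

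Finally, to obtain the regularity demanded by Definition~\ref{def1} and the energy identity \eqref{EI}, I would first take data in $D(\mathcal{A})$, for which the semigroup theory furnishes strong solutions with the required time regularity and whose boundary traces have the distributional time derivative in $L^m((0,T)\times\Gamma_1,\alpha)$ of part (b); multiplying the equation by $u_t$ and integrating in space and time gives \eqref{EI}, the dissipation term being exactly $\int_{\Gamma_1}Q(\cdot,u_t)u_t$, which is nonnegative by \eqref{low}. For general data in $\mathcal{H}$ I would approximate by $D(\mathcal{A})$ data, pass to the limit using the continuous dependence supplied by the fixed-point step, and recover both the distributional identity \eqref{ovo} and the energy identity \eqref{EI} in the limit.
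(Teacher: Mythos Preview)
The paper does not prove Theorem~\ref{localexistencetheorem}; it is recalled verbatim from \cite[Theorem~4]{stable} and no argument is reproduced here. Your sketch is therefore not to be compared with a proof in the present paper but with the one in \cite{stable}, and it does capture the strategy used there: the wave operator with the nonlinear boundary damping is realized as a maximal monotone operator on $H^1_{\Gamma_0}(\Omega)\times L^2(\Omega)$, the source $u\mapsto f(\cdot,u)$ is locally Lipschitz from $H^1_{\Gamma_0}(\Omega)$ to $L^2(\Omega)$ precisely in the range $2<p\le 1+2^*/2$, and local existence and uniqueness then follow by a perturbation/fixed-point argument, with the energy identity obtained first for strong solutions and then by density.

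One imprecision is worth flagging: the Duhamel formula $U(t)=S(t)U_0+\int_0^t S(t-s)\mathcal{F}(U(s))\,ds$ is not literally available when $S(t)$ is a \emph{nonlinear} contraction semigroup, since one cannot superpose. The standard route --- and the one taken in \cite{stable} --- is either to truncate $f$ so that the perturbation becomes globally Lipschitz and then invoke the fact that $\mathcal{A}+\mathcal{B}$ is maximal monotone when $\mathcal{A}$ is maximal monotone and $\mathcal{B}$ is Lipschitz, or to set up the fixed-point map $v\mapsto U$ where $U$ solves $U'+\mathcal{A}U\ni \mathcal{F}(v)$, $U(0)=U_0$, using the contraction property of the nonlinear semigroup on differences. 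Either repair is routine, and the rest of your outline is sound.
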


\begin{rem} Actually Theorem~\ref{localexistencetheorem} was stated in \cite{stable} for
regular (i.e. $C^1$)  domains, but one immediately sees that
$\Omega$ can be also disconnected (even if this case is not of
particular interest).
\end{rem}

As a consequence of the arguments used in the proof of
Theorem~\ref{localexistencetheorem} it follows the following
continuation principle, which was used in the quoted paper without
an explicit proof. For the sake of clearness we prefer to give here
its proof.

\begin{thm}\label{continuation}
Suppose that \assref{Q1}--\assref{Q2} and \fassref{F1} hold, that
$2<p\le 1+2^*/2$, and $u_0\in H^1_{\Gamma_0}(\Omega)$, $u_1\in
L^2(\Omega)$. Then \eqref{P} has a unique weak maximal solution $u$
in $[0,T_{max})$. Moreover the following alternative holds:
\renewcommand{\labelenumi}{(\roman{enumi})}
\begin{enumerate}
    \item  either $T_{max}=\infty$;
    \item  or $T_{max} < \infty$ and $\lim\limits_{t \rightarrow T^{-}_{max}}
\|u(t)\|_{H^{1}_{\Gamma_{0}(\Omega)}}+\|u_t(t)\|_2 = \infty$.
\end{enumerate}
\end{thm}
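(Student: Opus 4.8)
The plan is to obtain the maximal solution by a standard extension/gluing argument built on the local existence theorem (Theorem~\ref{localexistencetheorem}), and then to derive the blow--up alternative from the energy identity \eqref{EI} together with the contrapositive of local existence. First I would define $T_{max}$ as the supremum of all $T>0$ for which a weak solution of \eqref{P} in $[0,T]$ exists. Local existence guarantees $T_{max}>0$. For any two weak solutions on overlapping intervals $[0,T_1]$ and $[0,T_2]$ with $T_1<T_2$, uniqueness on $[0,T_1]$ forces them to agree there, so one can define $u$ unambiguously on $[0,T_{max})$ by setting $u(t)$ equal to the value given by any weak solution whose interval contains $t$. The gluing property recorded in Remark~\ref{shift}(ii) (or rather its $a=0$ version, applied iteratively) ensures that this pointwise-defined $u$ is itself a genuine weak solution on every $[0,T']$ with $T'<T_{max}$, hence a weak solution in $[0,T_{max})$ in the sense of Definition~\ref{def1}. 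Maximality and uniqueness of $u$ then follow directly from the uniqueness clause in Theorem~\ref{localexistencetheorem} and the definition of $T_{max}$ as a supremum.

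For the alternative, suppose $T_{max}<\infty$ but that (ii) fails, i.e. there is a sequence $t_k\uparrow T_{max}$ along which $\|u(t_k)\|_{H^1_{\Gamma_0}(\Omega)}+\|u_t(t_k)\|_2$ stays bounded. The key idea is that the local existence time in Theorem~\ref{localexistencetheorem} depends only on a bound for the energy-space norm of the initial data, not on the initial time (by the autonomy noted in Remark~\ref{shift}). I would make this quantitative: there is a uniform $\delta>0$ such that whenever $\|u_0\|_{H^1_{\Gamma_0}(\Omega)}+\|u_1\|_2\le R$, the problem \eqref{Pa} with data $(u_0,u_1)$ posed at time $a$ admits a weak solution on $[a,a+\delta]$, with $\delta=\delta(R)$ independent of $a$. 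Picking $k$ large enough that $t_k+\delta>T_{max}$ and taking $(u(t_k),u_t(t_k))$ as new initial data at time $a=t_k$, I obtain a weak solution on $[t_k,t_k+\delta]$; gluing it to $u$ via Remark~\ref{shift}(ii) extends $u$ as a weak solution past $T_{max}$, contradicting the maximality of $T_{max}$. Therefore (ii) must hold, which with (i) gives the dichotomy.

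\textbf{The main obstacle} is justifying the uniform lower bound $\delta(R)$ on the existence time, i.e. that the local existence time from Theorem~\ref{localexistencetheorem} can be chosen to depend only on the energy-space norm of the data and not on the data themselves or on the initial time. Since the excerpt presents Theorem~\ref{localexistencetheorem} as a black box, I would extract this from the structure of its proof in \cite{stable}: the contraction-mapping (or Galerkin) scheme there produces $T$ through \emph{a priori} estimates governed by the nonlinearity exponents in \assref{Q1}--\assref{Q2} and \fassref{F1}, all of which are uniform on balls of the energy space. Autonomy then transfers this uniformity from the initial time $0$ to an arbitrary initial time $a$. A secondary technical point is checking that the two pieces glue correctly: one must verify the matching conditions $u_1(b+T_1)=u_2(b+T_1)$ and $(u_1)_t(b+T_1)=(u_2)_t(b+T_1)$ of Remark~\ref{shift}(ii), which hold here because the restarted solution is launched precisely from $(u(t_k),u_t(t_k))$, and that the regularity in Definition~\ref{def1}(a)--(b) is preserved across the junction. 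These are routine once the uniform existence time is in hand.
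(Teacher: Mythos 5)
Your proposal is correct and follows essentially the same route as the paper: both constructions rest on the observation that the local existence time of Theorem~\ref{localexistencetheorem} can be bounded below uniformly in terms of $\|u_0\|_{H^1_{\Gamma_0}(\Omega)}^2+\|u_1\|_2^2$ (a monotone function $T^*$ extracted from the proof in \cite{stable}), combined with autonomy, gluing via Remark~\ref{shift}, and a restart-past-$T_{max}$ contradiction for the alternative. The only point you gloss is the propagation of uniqueness to arbitrary overlapping intervals, which the paper handles explicitly with a supremum argument on the set where two solutions coincide; this is routine and your plan clearly accommodates it.
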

\begin{proof}
By the arguments in the proof of Theorem~\ref{localexistencetheorem}
it easily follows that the assured existence time $T$ depends on the
initial data $u_0$ and $u_1$ as a decreasing function of
$\|u_0\|_{H^1_{\Gamma_0}(\Omega)}^2+\|u_1\|_2^2$, which is in the
sequel denoted by
$$T^*=T^*(\|u_0\|_{H^1_{\Gamma_0}(\Omega)}^2+\|u_1\|_2^2).$$
From this remark the statement follows in a standard way. More
precisely we first construct the unique maximal solution $u$ as
follows. We set $\cal{U}$ to be the set of all weak solutions of
\eqref{P} in right--open intervals $[0,T')$, $T'>0$.

Then we claim that for any couple $u$, $v$ of elements of $\cal{U}$,
weak solutions respectively in $[0,T_u)$ and $[0,T_v)$, $u=v$ in the
intersection $[0,T)$ of their domains. To prove our claim we set
\begin{equation}\label{t0}
t_0:=\sup\{t\in [0,T): u(s)=v(s)\quad\text{for all}\quad s\in
[0,t)\},
\end{equation}
so $t_0\le T$. Now we suppose by contradiction that $t_0<T$. Since
$$u,v\in C([0,t_0];H^{1}_{\Gamma_{0}}(\Omega))\cap
C^1([0,t_0];L^{2}(\Omega))$$ we easily get that $u(t_0)=v(t_0):=v_0$
and $u_t(t_0)=v_t(t_0):=v_1$. Now since $u,v$ are weak solutions
(see Remark~\ref{shift}) of \eqref{Pa} with $a=t_0$ and initial data
$v_0$, $v_1$,  we see  that $\tau_{-t_0}u$ and $\tau_{-t_0}v$
(defined in \eqref{ua}) are both weak solutions in $[0,T-t_0)$ of
\eqref{P} with initial data $v_0$ and $v_1$. Hence, by the
uniqueness assertion in Theorem~\ref{localexistencetheorem} we get
that $\tau_{-t_0}u=\tau_{-t_0}v$ in $[0,T'']$,
$T''=T^*(\|v_0\|_{H^1_{\Gamma_0}(\Omega)}^2+\|v_1\|_2^2)>0$.
Consequently $u=v$ in $[0,t_0+T'']$, contradicting \eqref{t0}. Hence
$t_0=T$ proving our claim. To construct the maximal weak solution we
define $u$ to coincide with any element of $\cal{U}$ in the union of
the domains.

We now have to prove the alternative in the statement.  We suppose,
by contradiction, that
\begin{equation}\label{alternative} T_{\max} < \infty \quad
\mbox{and  } \liminf_{t\rightarrow
T^{-}_{\max}}\left(\|u(t)\|_{H^{1}_{\Gamma_{0}}(\Omega)}+\|u_t(t)\|_2\right)
< \infty.
\end{equation}
 Then there is a sequence $t_{n}\rightarrow
T^{-}_{\max}$ such that
$\left\|u(t_n)\right\|_{H^{1}_{\Gamma_{0}}(\Omega)}$ and
$\|u_t(t_n)\|_2$ are bounded, so $M:=\sup\limits_n
\left(\|u(t_n)\|_{H^1_{\Gamma_0}(\Omega)}^2+\|u_t(t_n)\|_2^2\right)<\infty$.
By Theorem~\ref{localexistencetheorem} and the monotonicity of $T^*$
asserted before for each $n\in\N$ the problem \eqref{P} with initial
data $u(t_n)$ and $u_t(t_n)$ has a unique weak solution $v_n$ in
$[0,T_1]$, $T_1=T^*(M)$. Hence, for each $n\in\N$,
$w_n=\tau_{t_n}v_n$ is a weak solution of \eqref{Pa} in
$[t_n,t_n+T_1]$ with $a=t_n$ and initial data $u(t_n)$ and
$u_t(t_n)$. It follows (see Remark \ref{shift}) that $u$ can be
extended to a weak solution of \eqref{P} in $[0,t_n+T_1]$,
contradicting the maximality of $u$ for $n$ large enough.
\end{proof}
We now recall from \cite{stable} the additional assumption on $f$
needed to set--up the potential well theory. \numeraf
\item \label{F2} There is $c_3>0$ such that
$$F(x,u)\le \frac {c_3}p|u|^p$$
for all $x\in\Omega$ and $u\in\R$, where $F$ is the primitive of $f$
defined in \eqref{F}. \finenumeraf 1

\begin{rem}\label{remnew3}
It is clear, recalling Remark~\ref{remnew2}, that $f_0$ given in
\eqref{f0} satisfies \fassref{F1} and \fassref{F2} when $2\le q<p$,
$a\le 0$ and $b\in \R$.
\end{rem}

 We set, when $2<p\le 2^*$,
\begin{equation}
K_0=\sup_{u\in H^1_{\Gamma_0}(\Omega),\,\,u\not=0} \frac
{\int_\Omega F(\cdot,u)}{\|\nabla u\|_2^p}. \label{K0}\end{equation}
By \fassref{F1} and \fassref{F2}, we have $0\le K_0\le
p^{-1}c_3B_1^p$, where $B_1$ is the optimal constant of the Sobolev
embedding $H^1_{\Gamma_0}(\Omega)\hookrightarrow L^p(\Omega)$, i.e.
\begin{equation}\label{Poincare}
B_1=\sup\limits_{u\in H^1_{\Gamma_0}(\Omega),\,\,u\not=0} \dfrac
{\|u\|_p}{\|\nabla u\|_2}.
\end{equation}
 We denote
 \begin{footnote}{this is the correct form of
$\lambda_1$, which is the unique positive maximum point of the
function $\lambda^2/2-K_0\lambda^p$,
 incorrectly typewritten in \cite{stable}} \end{footnote}
\begin{gather}\label{defE1}
\lambda_1=(1/pK_0)^{1/(p-2)}, \qquad E_1=\left(\frac 12-\frac
1p\right)\lambda_1^2, \\
\intertext{when $K_0>0$, while $\lambda_1=E_1=+\infty$ when $K_0=0$,
and} \label{defW}W=\{(u_0,u_1)\in H^1_{\Gamma_0}(\Omega)\times
L^2(\Omega): E(u_0,u_1)<E_1\quad \text{and}\quad \|\nabla
u_0\|_2>\lambda_1\} \intertext{where, in accordance to \eqref{eff},}
E(u_0,u_1):=\frac 12 \|u_1\|_2^2+\frac 12\|\nabla u_0\|_2^2-
\int_\Omega F(\cdot,u_0).
\end{gather}
Clearly when $K_0=0$ then $W=\emptyset$, so what follows is of
interest only when $K_0>0$.   On the other hand when $K_0=0$ all
weak solutions are global (see \cite[p. 389]{stable}).  We recall
the following result (\cite[Lemma~2, (ii)]{stable}).

\begin{lem}\label{lemma1}
Suppose that the assumptions of Theorem~\ref{localexistencetheorem},
together with \fassref{F2}, hold true. Let $u$ be the maximal
solution of \eqref{P}. Assume moreover that $(u_0,u_1)\in W$. Then
there is $\lambda_2>\lambda_1$ such that $\|\nabla u(t)\|_2\ge
\lambda_2$ and $\|u(t)\|_p\ge (pK_0/c_3)^{1/p} \lambda_2$ for all
$t\in [0,T_{\text{max}})$.
\end{lem}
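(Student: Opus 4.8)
The plan is to run the standard potential-well invariance argument, using the monotonicity of the energy together with a careful analysis of the scalar function $g(\lambda):=\frac12\lambda^2-K_0\lambda^p$, $\lambda\ge 0$. First I would record the basic lower bound $E(t)\ge g(\|\nabla u(t)\|_2)$: dropping the nonnegative kinetic term in \eqref{eff} and using the very definition \eqref{K0} of $K_0$, which gives $\int_\Omega F(\cdot,u)\le K_0\|\nabla u\|_2^p$, one obtains $E(t)\ge \frac12\|\nabla u(t)\|_2^2-K_0\|\nabla u(t)\|_2^p=g(\|\nabla u(t)\|_2)$. A direct computation of $g'$ shows that $g$ is strictly increasing on $[0,\lambda_1]$ and strictly decreasing on $[\lambda_1,\infty)$, with maximum value $g(\lambda_1)=E_1$ (this is precisely the computation behind the definitions \eqref{defE1}) and $g(\lambda)\to-\infty$ as $\lambda\to\infty$.

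The second ingredient is that the energy does not increase: by the energy identity \eqref{EI} and the sign condition \eqref{low}, namely $Q(\cdot,u_t)u_t\ge \alpha|u_t|^m\ge0$, the right-hand side of \eqref{EI} is nonpositive, so $E(t)\le E(0)=E(u_0,u_1)=:E_0$, and $E_0<E_1$ because $(u_0,u_1)\in W$ (see \eqref{defW}). I would then define $\lambda_2$ to be the unique number in $(\lambda_1,\infty)$ with $g(\lambda_2)=E_0$; this exists and is unique by the strict monotonicity of $g$ on its decreasing branch together with $E_0<E_1=\max g$. In particular $\lambda_2>\lambda_1$.

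The heart of the proof is then an invariance (continuity--connectedness) argument showing $\|\nabla u(t)\|_2\ge\lambda_2$ on all of $[0,T_{\max})$. Writing $\varphi(t):=\|\nabla u(t)\|_2$, which is continuous since $u\in C([0,T_{\max});H^1_{\Gamma_0}(\Omega))$, the estimate of the first paragraph gives $g(\varphi(t))\le E(t)\le E_0$ for every $t$, so $\varphi(t)$ stays in the sublevel set $S:=\{\lambda\ge0:g(\lambda)\le E_0\}$. The shape of $g$ forces $S=[0,a]\cup[\lambda_2,\infty)$ for some $a<\lambda_1$ (with $[0,a]$ possibly empty when $E_0<0$), so $S$ is disconnected by the gap $(a,\lambda_2)\ni\lambda_1$. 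At $t=0$ the same estimate together with the assumption $\|\nabla u_0\|_2>\lambda_1$ forces $\varphi(0)\ge\lambda_2$, i.e. places $\varphi(0)$ in the upper component $[\lambda_2,\infty)$; since $\varphi([0,T_{\max}))$ is connected it cannot jump across the gap, whence $\varphi(t)\ge\lambda_2$ for all $t$. This is the step I expect to be the main (conceptual) obstacle, since it is where the hypothesis $\|\nabla u_0\|_2>\lambda_1$ is genuinely used and where one must rule out the solution leaking through the barrier at $\lambda_1$.

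For the remaining $L^p$ estimate I would feed the gradient bound back into the energy. Keeping the potential part, $E(t)\ge \frac12\|\nabla u(t)\|_2^2-\int_\Omega F(\cdot,u(t))$ yields $\int_\Omega F(\cdot,u(t))\ge\frac12\|\nabla u(t)\|_2^2-E_0\ge \frac12\lambda_2^2-E_0=K_0\lambda_2^p$, where the last equality is just $g(\lambda_2)=E_0$ and the middle inequality uses $\|\nabla u(t)\|_2\ge\lambda_2$. Finally \fassref{F2} gives $\frac{c_3}{p}\|u(t)\|_p^p\ge\int_\Omega F(\cdot,u(t))\ge K_0\lambda_2^p$, that is $\|u(t)\|_p\ge(pK_0/c_3)^{1/p}\lambda_2$, completing the proof.
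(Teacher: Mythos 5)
Your proof is correct: the lower bound $E(t)\ge g(\|\nabla u(t)\|_2)$ with $g(\lambda)=\tfrac12\lambda^2-K_0\lambda^p$, the monotonicity of $E$ from \eqref{EI} and \eqref{low}, the definition of $\lambda_2$ as the root of $g=E_0$ on the decreasing branch, the continuity--connectedness argument, and the final application of \fassref{F2} all check out (including the exponent bookkeeping in $\|u(t)\|_p\ge (pK_0/c_3)^{1/p}\lambda_2$). The paper does not reprove this lemma --- it quotes it from \cite[Lemma~2, (ii)]{stable} --- but your argument is exactly the standard potential--well invariance proof on which that result rests, so it matches the intended approach.
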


Our final assumptions are the following ones. \numera
\item There is $c_4>0$ such that
$$Q(x,v)v\ge c_4\alpha(x) \left(|v|^\mu+|v|^m\right),\qquad 1<\mu\le m,$$\label{Q3}
for all $x\in\Gamma_1$, $v\in \R$; \finenumera 1 \numeraf
\item there is $\eps_0>0$ such that for all $\eps\in (0,\eps_0]$
there exists $c_5=c_5(\eps)>0$ such that
$$f(x,u)u-(p-\eps)F(x,u)\ge c_5 |u|^p$$
for all $x\in\Omega$, $u\in\R$. \label{F3} \finenumeraf 1

\begin{rem}\label{finalremark} Clearly $Q_0$ given in \eqref{Q0}
satisfies, beside \assref{Q1}--\assref{Q2}\begin{footnote}{as noted
in Remark~\ref{remnew1}}\end{footnote}, also \assref{Q3} with
$c_4=1$. Moreover \assref{Q3} immediately follows from \eqref{low}
when $m=\mu$, while it is not a consequence of
\assref{Q1}--\assref{Q2} when $\mu<m$. Next  $f_0$ given in
\eqref{f0} satisfies, beside
\fassref{F1}--\fassref{F2}\begin{footnote}{see
Remark~\ref{remnew3}}\end{footnote}, also \fassref{F3} when $a\le 0$
and $b>0$, with $\eps_0=p-q>0$ and $c_5(\eps)=b\eps/p$. Next
\fassref{F3} implies the standard growth condition
\begin{equation}\label{quadr}
f(x,u)u\ge p F(x,u)\qquad\text{for all $x\in\Omega$, $u\in \R$.}
\end{equation}
Finally it is worth observing that \fassref{F1}--\fassref{F2} and
\eqref{quadr} cannot be responsible of a blow--up phenomenon, since
 $f\equiv 0$ satisfies them and blow--up does not occur in this case.
\end{rem}

\section{Main result}\label{section3}
\bigskip
\noindent This section is devoted to state and prove our main
result. We start with a key estimate.
\begin{lem}\label{lemma2} Let $1<m\le 1+p/2$ and $2<p\le 2^*$. Then
there is a positive constant $C_1=C_1(m,p,\Omega, \Gamma_0)$ such
that
\begin{equation}\label{mainestimate}
    \|u\|_{m,\Gamma_1}^m\le C_1 \|u\|_p^{m-1}\|\nabla
    u\|_2\qquad \text{for all $u\in H^1_{\Gamma_0}(\Omega)$}.
\end{equation}
\end{lem}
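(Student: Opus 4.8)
The plan is to convert the boundary integral $\|u\|_{m,\Gamma_1}^m=\int_{\Gamma_1}|u|^m\,d\sigma$ into volume integrals by means of the Gauss--Green formula, and then to balance exponents. Concretely, I would fix a Lipschitz vector field $h\colon\overline\Omega\to\R^n$ with $h\cdot\nu=1$ on $\Gamma=\partial\Omega$, $\nu$ being the outer normal. Since $\Omega$ is $C^{1,1}$ such a field exists: for instance take $h=\eta\nabla d$, where $d$ is the signed distance to $\Gamma$, which is $C^{1,1}$ in a tubular neighbourhood of $\Gamma$ with $\nabla d=\nu$ on $\Gamma$, and $\eta$ is a cut--off equal to $1$ near $\Gamma$. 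Applying the divergence theorem to $|u|^m h$ and using that $u$, hence $|u|^m$, vanishes on $\Gamma_0$ in the trace sense, I expect the identity
\[
\|u\|_{m,\Gamma_1}^m=\int_{\Gamma}|u|^m (h\cdot\nu)\,d\sigma=\int_\Omega |u|^m\operatorname{div}h\,dx+m\int_\Omega |u|^{m-2}u\,(h\cdot\nabla u)\,dx .
\]
This reduces matters to estimating the two volume integrals on the right.

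Before estimating, I would check that the formula is legitimate for $u\in H^1_{\Gamma_0}(\Omega)$. The chain rule for Sobolev functions gives $\nabla(|u|^m)=m|u|^{m-2}u\,\nabla u$, and since $m\le 1+p/2\le 1+2^*/2$ one has $|u|^m\in L^1(\Omega)$ and $\int_\Omega|u|^{m-1}|\nabla u|\le \|u\|_{2(m-1)}^{m-1}\|\nabla u\|_2<\infty$, so $|u|^m\in W^{1,1}(\Omega)$ and Gauss--Green applies on the Lipschitz domain $\Omega$. Moreover the exponent identity $1+2^*/2=2(n-1)/(n-2)$ shows that $m$ does not exceed the critical trace exponent, so the trace of $|u|^m$ lies in $L^1(\Gamma)$ and the boundary integral is finite; the trace being $|\,\mathrm{tr}\,u|^m$, it indeed vanishes on $\Gamma_0$.

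For the estimates, the second integral is handled by Cauchy--Schwarz followed by H\"older: $\int_\Omega|u|^{m-1}|\nabla u|\le \|\nabla u\|_2\big(\int_\Omega|u|^{2(m-1)}\big)^{1/2}$, and since $2(m-1)\le p$ — this is exactly where the hypothesis $m\le 1+p/2$ is used — H\"older on the bounded domain gives $\big(\int_\Omega|u|^{2(m-1)}\big)^{1/2}\le |\Omega|^{\theta}\|u\|_p^{m-1}$ for a suitable $\theta\ge 0$, producing the target term $\|u\|_p^{m-1}\|\nabla u\|_2$. For the first integral I would split $|u|^m=|u|^{m-1}\,|u|$ and apply H\"older with conjugate exponents $p/(m-1)$ and $q:=p/(p-m+1)$, obtaining $\int_\Omega|u|^m\le \|u\|_p^{m-1}\|u\|_q$. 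Because $p-m+1\ge p/2\ge 1$ one has $q\le 2\le 2^*$, so the Sobolev--Poincar\'e inequality $\|u\|_q\le C\|\nabla u\|_2$ on $H^1_{\Gamma_0}(\Omega)$ (available since $\sigma(\Gamma_0)>0$) again yields a bound by $\|u\|_p^{m-1}\|\nabla u\|_2$. Collecting the two estimates gives \eqref{mainestimate}, with $C_1$ depending only on $m$, $p$, $|\Omega|$, the field $h$ and the Poincar\'e constant, that is on $m,p,\Omega,\Gamma_0$.

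The exponent bookkeeping is routine; the only genuine points requiring care are the construction of $h$, which rests on the $C^{1,1}$ regularity of $\partial\Omega$, and the rigorous justification of Gauss--Green for the non--smooth integrand $|u|^m$, together with the verification that $m$ stays below the trace--critical exponent so that both the boundary term and the chain--rule identity make sense. I expect the latter to be the main obstacle, although it is entirely standard. An alternative that bypasses the explicit field $h$ is to invoke the $W^{1,1}(\Omega)\hookrightarrow L^1(\Gamma)$ trace theorem directly for $v=|u|^m$, which gives $\int_\Gamma|u|^m\le C\big(\|u\|_m^m+m\int_\Omega|u|^{m-1}|\nabla u|\big)$ and leads to the same two volume integrals.
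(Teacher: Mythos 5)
Your proof is correct and follows essentially the same route as the paper's: both convert $\int_{\Gamma}|u|^m$ into volume integrals against a bounded field whose normal component equals $1$ on $\Gamma$, using the $W^{1,1}$ chain rule for $|u|^m$ and the same exponent bookkeeping ($2(m-1)\le p$ from $m\le 1+p/2$, plus a Sobolev--Poincar\'e bound for the zero--order term). The only real difference is in the construction of that field: the paper takes $\nabla w$ with $w$ solving the auxiliary Neumann problem $-\Delta w+w=0$, $\partial_\nu w=1$, upgraded to $C^1(\overline\Omega)$ via Agmon--Douglis--Nirenberg and Morrey, whereas you build $h=\eta\nabla d$ from the signed distance, trading the elliptic regularity machinery for the (equally standard) $C^{1,1}$ regularity of $d$ near a $C^{1,1}$ boundary.
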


\begin{proof} We first consider the auxiliary non--homogeneous Neumann problem
\begin{equation}\label{phi}
\begin{cases}-\Delta w+w=0\qquad &\text{in $\Omega$}\\
\partial_\nu w=1\qquad &\text{on $\Gamma$.}
\end{cases}
\end{equation}
By Riesz--Fr\'echet theorem problem \eqref{phi} has a unique weak
solution, i.e. $w\in H^1(\Omega)$ such that
\begin{equation}\label{weakneumann}
 \int_\Omega \nabla w\nabla \phi+\int_\Omega w\phi=\int_\Gamma
 \phi\qquad\text{for all $\phi\in H^1(\Omega)$.}
\end{equation}
Moreover, since $\Omega$ is bounded and $C^{1,1}$, by
Agmon--Douglis--Nirenberg regularity estimate (here used in the form
stated in \cite[Theorem~2.4.2.7, p. 126]{grisvard}), we have $w\in
W^{2,q}(\Omega)$ for all $q>1$. It follows, by Morrey's Theorem
(\cite[Corollary~9.15, p. 285]{brezis2}), that $w\in
C^1(\overline{\Omega})$.

Now let $u\in H^1(\Omega)$. We claim that $|u|^m\in
W^{1,1}(\Omega)$. Since $m\le 2^*$, by Sobolev embedding theorem we
have $|u|^m\in L^1(\Omega)$. Moreover, by using the chain rule for
Sobolev function (see \cite[Theorem~2.2]{mm}),  we get that $|u|^m$
possesses a weak gradient $\nabla (|u|^m)=m|u|^{m-2}u\,\nabla u$.
Since $m\le 1+2^*/2$, using Sobolev embedding theorem again,  we
have $|u|^{m-2}u\in L^2(\Omega)$, hence by H\"{o}lder inequality we
get that $\nabla (|u|^m)\in [L^1(\Omega)]^n$ and
$${\|\nabla (|u|^m)\|}_1\le m\left(\int_\Omega
|u|^{2(m-1)}\right)^{1/2}{\|\nabla u\|}_2.$$
Since $2(m-1)\le p$ and
$\Omega$ is bounded it follows
\begin{equation}\label{Iroman}
{\|\nabla (|u|^m)\|}_1\le m|\Omega|^{\frac 12 -\frac
{m-1}p}\|u\|_p^{m-1}\|\nabla u\|_2,
\end{equation}
where $|\Omega|$ denotes the Lebesgue measure of $\Omega$. Our claim
is then proved. Consequently  (see \cite[Corollary~9.8~p.
277]{brezis2}) there is a sequence $(\phi_n)_n$ in
$C^\infty_c(\R^N)$ such that ${\phi_n}_{|\Omega}\to |u|^m$ in
$W^{1,1}(\Omega)$. By the trace theorem it follows that
${\phi_n}_{|\Gamma}\to {|u|^m}_{|\Gamma}$ in $L^1(\Gamma)$. Since in
particular $\phi_n\in H^1(\Omega)$ then \eqref{weakneumann} holds
with $\phi=\phi_n$ for $n\in\N$.
 Since $w,|\nabla
w|\in L^\infty(\Omega)$ we can pass to the limit as $n\to\infty$ and
get
\begin{equation}\label{IIroman}
\int_\Omega \nabla w \nabla(|u|^m)+\int_\Omega w |u|^m=\int_\Gamma
|u|^m.
\end{equation}
Combining \eqref{Iroman} and \eqref{IIroman} we have
$$\|u\|_{m,\Gamma}^m\le \|w\|_\infty  \|u\|_m^m+m\|\nabla w\|_\infty |\Omega|^{\frac 12 -\frac
{m-1}p}\|u\|_p^{m-1}\|\nabla u\|_2$$ for all $u\in H^1(\Omega)$.
Since $m\le p\le 2^*$ and $\Omega$ is bounded,  we consequently get
by using H\"{o}lder inequality again
$$\|u\|_{m,\Gamma}^m\le \left(\|w\|_\infty |\Omega|^{1 -\frac
mp} \|u\|_p+m\|\nabla w\|_\infty |\Omega|^{\frac 12 -\frac
{m-1}p}\|\nabla u\|_2\right)\|u\|_p^{m-1}.$$ By restricting now to
$u\in H^1_{\Gamma_0}(\Omega)$ we use the Poincar\`e type inequality
recalled above to get \eqref{mainestimate}, where $C_1$ is given by
$$C_1=\|w\|_\infty |\Omega|^{1 -\frac
mp} B_1+m\|\nabla w\|_\infty |\Omega|^{\frac 12 -\frac {m-1}p},$$
where $B_1$ is the positive constant defined in \eqref{Poincare}.
Since $w$ depends only on $\Omega$, the proof is complete.
\end{proof}

We can finally state our main result.

\begin{thm}\label{theorem 4}
Suppose that \assref{Q1}--\assref{Q3} and \fassref{F1}--\fassref{F3}
hold, that $\alpha\in L^\infty(\Gamma_1)$,
$$2<p\le 1+2^*/2,\qquad 1<m<1+p/2,$$
and $(u_0,u_1)\in W$.  Then for any solution of \eqref{P} we have
$T_{max} < \infty$ and $\|u(t)\|_p\to \infty$ (so also
$\|u(t)\|_\infty\to \infty$ and $\|\nabla u(t)\|_2\to \infty$) as
$t\to T_{max}^-$.
\end{thm}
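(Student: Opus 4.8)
The plan is to adapt the concavity-type argument of \cite{stable} and \cite{georgiev}, the only genuine novelty being that the boundary dissipation is tamed by the elementary estimate of Lemma~\ref{lemma2} in place of the Hilbert/Besov interpolation used previously. First I would record the structural consequences of $(u_0,u_1)\in W$. By the energy identity \eqref{EI} and \eqref{low} the map $t\mapsto E(t)$ is non-increasing, so $E(t)\le E(0)<E_1$; hence, setting $\mathcal H(t):=E_1-E(t)$, one has $\mathcal H(0)>0$ and $\mathcal H'(t)=\int_{\Gamma_1}Q(\cdot,u_t)u_t\ge 0$, so $\mathcal H$ is positive and non-decreasing. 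Lemma~\ref{lemma1} keeps the orbit in the unstable set, giving $\|\nabla u(t)\|_2\ge\lambda_2>\lambda_1$ and $\|u(t)\|_p\ge c>0$ on $[0,T_{max})$; in particular $\tfrac12\|\nabla u\|_2^2>\tfrac12\lambda_1^2>E_1$. Recalling $\mathcal H=E_1-\tfrac12\|u_t\|_2^2-\tfrac12\|\nabla u\|_2^2+\int_\Omega F$ and using \fassref{F2}, I would then deduce the three comparison bounds on which everything rests,
$$0<\mathcal H(0)\le\mathcal H(t)\le\int_\Omega F(\cdot,u)\le\tfrac{c_3}{p}\|u\|_p^p,\qquad \|\nabla u(t)\|_2^2\le C\|u\|_p^p,\qquad \|u_t(t)\|_2^2\le C\|u\|_p^p,$$
the last two obtained by dropping negative terms and absorbing the constant $E_1$ via the lower bound $\|u\|_p\ge c>0$. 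That same lower bound lets me replace any power $\|u\|_p^s$ with $s\le p$ by $C\|u\|_p^p$, a device used repeatedly below.

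Next I would introduce the blow-up functional $\mathcal L(t)=\mathcal H(t)^{1-\sigma}+\eps\int_\Omega u\,u_t$, with $\sigma>0$ small and $\eps>0$ small, both fixed at the end. Taking $\varphi=u$ in the weak formulation \eqref{ovo} (admissible by part~(b) of Definition~\ref{def1}) and differentiating the resulting integral identity gives, for a.e.\ $t$,
$$\frac{d}{dt}\int_\Omega u\,u_t=\|u_t\|_2^2-\|\nabla u\|_2^2+\int_\Omega f(\cdot,u)u-\int_{\Gamma_1}Q(\cdot,u_t)u.$$
Inserting \fassref{F3} (to produce a clean term $c_5\|u\|_p^p$) together with $\int_\Omega F=\tfrac12\|u_t\|_2^2+\tfrac12\|\nabla u\|_2^2-E_1+\mathcal H$, and using that $\|\nabla u\|_2^2\ge\lambda_2^2>\lambda_1^2$ makes the leftover gradient/energy combination a positive constant, I arrive at a lower bound of the form
$$\frac{d}{dt}\int_\Omega u\,u_t\ge a_1\|u_t\|_2^2+a_2\|u\|_p^p+a_3\mathcal H(t)-\int_{\Gamma_1}Q(\cdot,u_t)u,$$
with positive constants $a_1,a_2,a_3$.

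The crux, and the main obstacle, is the boundary term. By \assref{Q2} and Young's inequality, for every $\delta>0$,
$$\Big|\int_{\Gamma_1}Q(\cdot,u_t)u\Big|\le\delta\Big(\|u\|^m_{m,\Gamma_1,\alpha}+\|u\|^\mu_{\mu,\Gamma_1,\alpha}\Big)+C_\delta\int_{\Gamma_1}\alpha\big(|u_t|^m+|u_t|^\mu\big),$$
where the last integral is $\le c_4^{-1}\mathcal H'(t)$ by \assref{Q3}. Applying Lemma~\ref{lemma2} with exponents $m$ and $\mu$ (both in $(1,1+p/2]$), using $\alpha\in L^\infty(\Gamma_1)$ and $\|\nabla u\|_2\le C\|u\|_p^{p/2}$, gives $\|u\|_{m,\Gamma_1,\alpha}^m\le C\|u\|_p^{\,m-1+p/2}$ and likewise for $\mu$; here the hypothesis $m<1+p/2$ forces $m-1+p/2<p$, so the boundedness-below device turns both into $C\|u\|_p^p$. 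The delicate point is the $\mathcal H'$-term. I would let $\delta$ depend on time through $\delta^{-m'}\sim\eps^{-1}\mathcal H(t)^{-\sigma}$; since $m/m'=m-1$, this yields $\delta^m\sim\eps^{m-1}\mathcal H^{\sigma(m-1)}\le C\eps^{m-1}\|u\|_p^{\,p\sigma(m-1)}$. This choice simultaneously keeps the coefficient of $\mathcal H'$ in $\mathcal L'$, namely $(1-\sigma)\mathcal H^{-\sigma}-\eps C_\delta c_4^{-1}$, non-negative, and (for $\sigma$ so small that $p\sigma(m-1)\le p/2-(m-1)$) bounds the $|u|$-part by $\tfrac{\eps a_2}{2}\|u\|_p^p$. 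Fixing finally $\eps$ small enough that $\mathcal L(0)>0$, I obtain $\mathcal L'(t)\ge C\eps\,\|u\|_p^p\ge 0$, so $\mathcal L$ is positive and non-decreasing.

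To close I would verify the reverse estimate $\mathcal L(t)^{1/(1-\sigma)}\le C\|u\|_p^p$: indeed $\mathcal H^{(1-\sigma)/(1-\sigma)}=\mathcal H\le C\|u\|_p^p$, while $\big(\eps\int_\Omega u\,u_t\big)^{1/(1-\sigma)}\le C(\|u\|_2\|u_t\|_2)^{1/(1-\sigma)}\le C\|u\|_p^{(1+p/2)/(1-\sigma)}\le C\|u\|_p^p$, the last step using $\|u_t\|_2\le C\|u\|_p^{p/2}$, the choice $\sigma\le\tfrac{p-2}{2p}$, and once more the lower bound on $\|u\|_p$. Combined with the previous step this gives the autonomous inequality $\mathcal L'(t)\ge C\,\mathcal L(t)^{1/(1-\sigma)}$ with exponent $1/(1-\sigma)>1$ and $\mathcal L(0)>0$; integrating it shows $\mathcal L$ must become infinite at some finite $t^\ast$, whence $T_{max}\le t^\ast<\infty$. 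Since $\mathcal L\le C\big(\|u\|_p^{p(1-\sigma)}+\|u\|_p^{1+p/2}\big)$, the blow-up of $\mathcal L$ forces $\|u(t)\|_p\to\infty$ as $t\to T_{max}^-$, and then $\|\nabla u(t)\|_2\to\infty$ and $\|u(t)\|_\infty\to\infty$ follow from $\|u\|_p\le B_1\|\nabla u\|_2$ and $\|u\|_p\le|\Omega|^{1/p}\|u\|_\infty$, in accordance with the continuation alternative of Theorem~\ref{continuation}.
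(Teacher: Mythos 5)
Your proposal is correct and follows essentially the same route as the paper's own proof: the identical auxiliary functional $\mathcal{H}^{1-\sigma}(t)+\eps\int_\Omega u\,u_t$, the same key use of Lemma~\ref{lemma2} to tame the boundary term, and the same concluding differential inequality $\mathcal{L}'\ge C\mathcal{L}^{1/(1-\sigma)}$; your variations (taking $\mathcal{H}=E_1-E$ and exploiting $\lambda_2>\lambda_1$ where the paper inserts an intermediate level $E_2\in(E_0,E_1)$, and a two--factor Young inequality with the time--dependent weight $\delta(t)\sim(\eps\,\mathcal{H}^{\sigma}(t))^{1/m'}$ in place of the paper's three--factor Young inequality with a fixed weight) are only bookkeeping. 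The one step to tighten is the very last: the inequality $\mathcal{L}'\ge C\mathcal{L}^{1/(1-\sigma)}$ forces $T_{max}\le t^*<\infty$ but does not by itself show that $\mathcal{L}(t)\to\infty$ as $t\to T_{max}^-$ (the solution could cease to exist strictly before $t^*$), so the assertion $\|u(t)\|_p\to\infty$ should be obtained, as the paper does, from the continuation alternative of Theorem~\ref{continuation} together with the energy bound \eqref{new3} -- which you in any case have at hand.
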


\begin{proof} The proof is a variant of the proof of
\cite[Theorem~7]{stable}, where we use Lemma~\ref{lemma2} instead of
the estimate \cite[(50)]{stable}. Nevertheless, since the proof of
\cite[Theorem~7]{stable} was itself a variant of the proof of
\cite[Theorem~2]{blowup}, we give in the sequel, for the sake of
clearness, a self--contained proof.

We first claim that our statement reduces to prove that problem
\eqref{P} cannot have global weak solutions, i.e. weak solutions in
the whole of $[0,\infty)$. Indeed, once this fact is proved, then we
must have, by Theorem~\ref{continuation}, that
$T_{\text{max}}<\infty$ and
\begin{equation}\label{new1}
\|u(t)\|_{H^{1}_{\Gamma_{0}}(\Omega)}+\|u_t(t)\|_2 \to
\infty\qquad\text{as $t\to T_{max}^-$}.
\end{equation}
Hence, to prove our claim, we have to show only that also
$\|u(t)\|_p\to \infty$ as $t\to T_{max}^-$. We first note that, by
\eqref{low} and \eqref{EI}, the energy function $E$ (defined in
\eqref{eff}) is decreasing. Hence, by \eqref{eff},
\begin{equation}\label{new2}
    \frac 12 \|\nabla u(t)\|_2^2 +\frac 12
    \|u_t(t)\|_2^2-\int_\Omega F(x, u(t))\le E_0
\end{equation}
for $t\in [0,T_{\text{max}})$, where $E_0:=E(u_0,u_1)$. Hence, by
\fassref{F2}, we have
\begin{equation}\label{new3}
    \frac 12 \|\nabla u(t)\|_2^2 +\frac 12
    \|u_t(t)\|_2^2-\frac {c_3}p\|u(t)\|_p^p\le E_0
\end{equation}
for $t\in [0,T_{\text{max}})$. Consequently, by \eqref{new1}, we get
that $\|u(t)\|_p\to \infty$ as well, so concluding the proof of our
claim.

We now have to prove that problem \eqref{P} cannot have global
solutions. We suppose by contradiction that $T_{\text{max}}=\infty$.
We  fix $E_2\in (E_0,E_1)$ and we set
\begin{equation}\label{calH}
\cal{H}(t)=\cal{H}(u(t),u_t(t))=E_2-E(u(t),u_t(t)).
\end{equation}
Since, as noted before, $E$ is decreasing, the function $\cal{H}$ is
increasing and $\cal{H}(t)\ge \cal{H}_0:=\cal{H}(0)=E_2-E_0>0$. In
the sequel of the proof we shall omit, for simplicity, explicit
dependence on time of $u$ and $u_t$ on the notation. By
Lemma~\ref{lemma1} we have
$$\cal{H}(t)\le E_2-\frac 12 \|\nabla u\|_2^2+
\int_\Omega F(\cdot,u)\le E_1-\frac 12\lambda_1^2+ \int_\Omega
F(\cdot,u)$$ and then, by \eqref{defE1} and \fassref{F3},
\begin{equation}\label{Hlow}
\cal{H}(t)\le \int_\Omega F(\cdot,u)\le \dfrac {c_3}p\|u\|_p^p.
\end{equation}

We now introduce, as in  \cite{georgiev} and \cite{levserr}, the
main auxiliary function which shows the blow--up properties of $u$,
i.e.
\begin{equation}\label{Z}
    \cal{Z}(t)=\cal{H}^{1-\eta}(t)+\xi\int_\Omega
u_t u,
\end{equation}
 where $\xi>0$ and $\eta\in (0,1)$ are  constants to
be fixed later. In order to estimate the derivative of $\cal{Z}$ it
is convenient to estimate
\begin{equation}\label{I}I_1:=\frac d{dt}\int_\Omega u_tu.
\end{equation}
Using  Definition~\ref{def1} we can take $\varphi=u$ in \eqref{ovo}
and get
\begin{equation}\label{II}
    I_1=\|u_t\|_2^2-\|\nabla u\|_2^2+\int_\Omega
f(\cdot,u)u-\int_{\Gamma_1}Q(\cdot,u_t)u
\end{equation}
almost everywhere in $(0,\infty)$.  Now we claim that there are
positive constants $c_6$ and $c_7$, depending on $p$ and $K_0$, such
that
\begin{equation}\label{III}
I_1\ge 2\|u_t\|_2^2+c_6\|u\|_p^p+c_7\|\nabla
u\|_2^2+2\cal{H}(t)-\int_{\Gamma_1}Q(\cdot,u_t)u
\end{equation}
in $[0,\infty)$. Using \eqref{eff} and \eqref{calH} we can write,
for any $\eps>0$,  the identity \eqref{II} in the form
\begin{multline}\label{IIbis}
I_1=\tfrac12 (p+2-\eps))\|u_t\|_2^2+\tfrac 12(p-2-\eps)\|\nabla
u\|_2^2\\+\int_\Omega
[f(\cdot,u)u-(p-\eps)F(\cdot,u)]+(p-\eps)\cal{H}(t)-(p-\eps)E_2-\int_{\Gamma_1}Q(\cdot,u_t)u.
\end{multline}
Using \fassref{F3}  for $0<\eps<\min\{\eps_0,p-2\}$ we consequently
get
\begin{align*}
I_1\ge &2\|u_t\|_2^2+\int_\Omega [f(\cdot,u)u -(p-\eps)F(\cdot,u)] +\tfrac
12(p-\eps-2)\|\nabla u\|_2^2
 -(p-\eps)E_2\\
& \hskip 6.6truecm +(p-\eps)\cal{H}(t) -\int_{\Gamma_1}Q(\cdot,u_t)u\\
\ge &2\|u_t\|_2^2+c_5(\eps)\|u\|_p^p+\tfrac 12(p-\eps-2)\|\nabla
u\|_2^2 -(p-\eps)E_2+2\cal{H}(t)-\int_{\Gamma_1}Q(\cdot,u_t)u.
\end{align*}
By Lemma~\ref{lemma1}
\begin{gather*}
\tfrac 12 (p-\eps-2)\|\nabla u\|_2^2-(p-\eps)E_2
\ge c_7(\eps)\|\nabla u\|_2^2+c_8(\eps),\\
\intertext{where} c_7(\eps)=\tfrac 12
(p-\eps-2)\left(1-\lambda_1^2/\lambda_2^2\right)\qquad\text{and}\quad
c_8(\eps)=\tfrac 12(p-\eps-2)\lambda_1^2-(p-\eps)E_2.
\end{gather*}
Clearly $c_7(\eps)>0$ and, as $\eps\to 0^+$,
$$c_8(\eps)\to\frac 12(p-2)\lambda_1^2-pE_2>\tfrac 12(p-2)\lambda_1^2-pE_1=0,$$
so also $c_8(\eps)>0$ for $\eps$ sufficiently small. Fixing a
sufficiently small $\eps=\overline{\eps}$ and setting
$c_6=c_5(\overline{\eps})$, $c_7=c_7(\overline{\eps})$ we conclude
the proof of \eqref{III}.

Now, in order to estimate $I_1$, we estimate the last term in
\eqref{III}.  Using \assref{Q2}, H\H{o}lder inequality (with respect
to $\mu_\alpha$), and assumption $\alpha\in L^\infty(\Gamma_1)$ we
obtain
$$I_2:=\left|\int_{\Gamma_1}Q(\cdot,u_t)u\right|\le
c_1
\|\alpha\|_{\infty,\Gamma_1}\left(\|u_t\|_{\mu,\Gamma_1,\alpha}^{\mu-1}\|u\|_{\mu,\Gamma_1}+
\|u_t\|_{m,\Gamma_1,\alpha}^{m-1}\|u\|_{m,\Gamma_1} \right).$$ Since
$\mu\le m$,  applying H\H{o}lder inequality again we get
\begin{equation}\label{IIIbis}
I_2\le C_2 \left(\|u_t\|_{\mu,\Gamma_1,\alpha}^{\mu-1}
+\|u_t\|_{m,\Gamma_1,\alpha}^{m-1}\right)\|u\|_{m,\Gamma_1}
\end{equation}
with
$C_2=C_2\left(\mu,m,c_1,\|\alpha\|_{\infty,\Gamma_1},\sigma(\Gamma_1)\right)>0$.
By  Lemma~\ref{lemma2} we consequently get
\begin{equation}\label{IV}
I_2\le C_3 \left(\|u_t\|_{\mu,\Gamma_1,\alpha}^{\mu-1}
+\|u_t\|_{m,\Gamma_1,\alpha}^{m-1}\right)\|u\|_p^{1-1/m}\|\nabla
u\|_2^{1/m}
\end{equation}
where
$C_3=C_3(\mu,m,p,c_1,\|\alpha\|_{\infty,\Gamma_1},\Omega,\Gamma_0)>0$.
Let us denote $$I_3:=\|u_t\|_{\mu,\Gamma_1,\alpha}^{\mu-1}
\|u\|_p^{1-1/m}\|\nabla u\|_2^{1/m}\qquad\text{and}\quad
I_4:=\|u_t\|_{\mu,\Gamma_1,\alpha}^{m-1} \|u\|_p^{1-1/m}\|\nabla
u\|_2^{1/m}.$$ It is convenient to write
\begin{equation}\label{V}
    I_3=\|u_t\|_{\mu,\Gamma_1,\alpha}^{\mu-1}\|\nabla u\|_2^{1/m}
\|u\|_p^{p\left(\frac 1\mu-\frac 1{2m}\right)} \|u\|_p^{1-\frac
1m-p\left(\frac 1\mu-\frac 1{2m}\right)}.
\end{equation}
We now apply, for any $\delta>0$, weighted Young's inequality to the
first three multiplicands in the right hand side of \eqref{V} , with
exponents $p_1=\mu'$, $p_2=2m$ and $p_3=2m\mu/(2m-\mu)$, so that
$\frac 1{p_1}+\frac 1{p_2}+\frac 1{p_3}=1$ (note that trivially
$p_1,p_2>1$ while $p_3>1$ as $\frac 1{p_3}=\frac 1\mu-\frac 1{2m}\in
(0,1)$ since $m\ge\mu>1$). Thus we get the estimate
\begin{equation}\label{VI}
I_3\le \left(\delta ^{\frac
1{1-\mu}}\|u_t\|_{\mu,\Gamma_1,\alpha}^\mu+\delta \|\nabla
u\|_2^2+\delta \|u\|_p^p\right)\|u\|_p^{1-\frac 1m-p\left(\frac
1\mu-\frac 1{2m}\right)}
\end{equation}
and, by particularizing it to the subcase $m=\mu$, also the estimate
\begin{equation}\label{VIbis}
I_4\le \left(\delta ^{\frac
1{1-m}}\|u_t\|_{m,\Gamma_1,\alpha}^m+\delta \|\nabla u\|_2^2+\delta
\|u\|_p^p\right)\|u\|_p^{1-\frac 1m-\frac p{2m}}.
\end{equation}
Moreover, by Lemma~\ref{lemma1} we have $\|u\|_p\ge
[c_3(pK_0)^{\frac 2{p-2}}]^{-1/p}$. Hence, since $\mu\le m$ implies
$1-\frac 1m-p\left(\frac 1\mu-\frac 1{2m}\right)\le 1-\frac 1m-\frac
p{2m}$,  we also have
\begin{equation}\label{upp}
\|u\|_p^{1-\frac 1m-p\left(\frac 1\mu-\frac 1{2m}\right)}\le
[c_3(pK_0)^{\frac 2{p-2}}]^{\frac 1\mu-\frac 1m} \|u\|_p^{1-\frac
1m-\frac p{2m}}.
\end{equation}
By combining \eqref{IV} and  \eqref{VI}--\eqref{upp} we get
\begin{equation}\label{VII}
I_2\le C_4\left[S(\delta)\left(
\|u_t\|_{\mu,\Gamma_1,\alpha}^\mu+\|u_t\|_{m,\Gamma_1,\alpha}^m
\right) +\delta \|\nabla u\|_2^2 +\delta
\|u\|_p^p\right]\|u\|_p^{1-\frac 1m-\frac p{2m}}
\end{equation}
where $S(\delta)=\left(\delta ^{\frac 1{1-\mu}}+\delta ^{\frac
1{1-m}}\right)$ and
$C_4=C_4(\mu,m,p,c_1,c_3,K_0,\|\alpha\|_{\infty,\Gamma_1},\Omega,\Gamma_0)>0$.
Now we set $\overline{\eta}=-\frac 1p \left(1-\frac 1m-\frac p{2m}
\right)$. Since $m<1+p/2$, we have $\overline{\eta}>0$. Moreover
$\overline{\eta}=\frac 1{2m}-\frac{m-1}{pm}<\frac 1{2m}<1$. By
combining \eqref{VII} and \eqref{Hlow} we get
\begin{equation}\label{VIII}
I_2\le C_5\left[S(\delta)\left(
\|u_t\|_{\mu,\Gamma_1,\alpha}^\mu+\|u_t\|_{m,\Gamma_1,\alpha}^m
\right) +\delta \|\nabla u\|_2^2 +\delta
\|u\|_p^p\right]{\cal{H}}^{-\overline{\eta}}(t)
\end{equation}
where
$C_5=C_5(\mu,m,p,c_1,c_3,K_0,\|\alpha\|_{\infty,\Gamma_1},\Omega,\Gamma_0)>0$.
Since,  by \eqref{EI} and  \assref{Q3} we have
$$\cal{H}'(t)\ge c_4\left(
\|u_t\|_{\mu,\Gamma_1,\alpha}^\mu+\|u_t\|_{m,\Gamma_1,\alpha}^m\right)$$
and $\cal{H}(t)\ge {\cal H}_0$, by \eqref{VIII} we get, for any
$\eta\in (0,\overline{\eta})$,
\begin{equation}\label{IX}
I_2\le C_6\left[S(\delta){\cal H}'(t){\cal{H}}(t)^{-\eta} +\delta
\|\nabla u\|_2^2 +\delta \|u\|_p^p\right]
\end{equation}
where
$C_6=C_6(\mu,m,p,c_1,c_3,K_0,\|\alpha\|_{\infty,\Gamma_1},\Omega,\Gamma_0,
{\cal H}_0)>0$. By combining \eqref{III} and \eqref{IX} we have the
desired estimate of $I_1$, i.e.
\begin{equation}\label{X}
    I_1\ge 2\|u_t\|_2^2+(c_6-\delta C_6)\|u\|_p^p+(c_7-\delta C_6)\|\nabla
    u\|_2^2+2{\cal{H}}(t)-S(\delta){\cal H}'(t){\cal{H}}^{-\eta}(t).
    \end{equation}
By making the choice $\delta=\text{min}\{c_6,c_7\}/(2C_6)$ from
\eqref{X} we get
\begin{equation}\label{XI}
    I_1\ge 2\|u_t\|_2^2+\frac{c_6}2\|u\|_p^p+\frac{c_7}2\|\nabla
    u\|_2^2+2{\cal{H}}(t)-C_7{\cal H}'(t){\cal{H}}^{-\eta}(t)
    \end{equation}
where
$C_7=C_7(\mu,m,p,c_1,c_3,K_0,\|\alpha\|_{\infty,\Gamma_1},\Omega,\Gamma_0,
{\cal H}_0)>0$.

By combining \eqref{Z} and \eqref{XI} we get, for any $\eta\in
(0,\overline{\eta})$,
$${\cal Z}'(t)\ge (1-\eta-C_7\xi){\cal H}^{-\eta}(t){\cal H}'(t)+2\xi{\cal{H}}(t)+2\xi\|u_t\|_2^2+\frac{\xi c_6}2\|u\|_p^p+\frac{\xi c_7}2\|\nabla
    u\|_2^2.$$
We now fix
$\eta=\text{min}\left\{\frac{\overline{\eta}}4,\frac{p-2}{4p}\right\}\in(0,1)$
and we restrict to $0<\xi\le(1-\eta)/{C_7}$. Hence, since
$\cal{H}'\ge 0$,  from previous estimate it follows
\begin{equation}\label{XII}
{\cal Z}'(t)\ge \xi c_8\left(\|u_t\|_2^2+\|\nabla
u\|_2^2+\|u\|_p^p+{\cal H}(t)\right)
\end{equation}
were $c_8=c_8(p,K_0)>0$. Next, since $\cal Z(0)={\cal
H}_0^{1-\eta}+\xi \int_\Omega u_0 u_1$, by fixing
$\xi=\xi_0=\xi_0(\mu,m,p,c_1,c_3,K_0,\|\alpha\|_{\infty,\Gamma_1},\Omega,\Gamma_0,u_0,u_1)>0$
sufficiently small we have $\cal Z(0)>0$, hence ${\cal Z}(t)\ge
{\cal Z(0)}>0$ by \eqref{XII}. Now we denote $r=1/(1-\eta)$ and
$\overline{r}=1/(1-\overline{\eta})$. Since
$0<\eta<\overline{\eta}<1$ we have $1<r<\overline{r}$. Now using
Cauchy--Schwartz inequality as well as the elementary inequality
$(A+B)^r\le 2^{r-1}(A^r+B^r)$ for $A,B\ge 0$, we have from \eqref{Z}
$${\cal Z}^r(t)\le \left({\cal H}^{1-\eta}(t)+\xi_0 \left|\int_\Omega u_tu\right| \,\right)^r
\le 2^{r-1}\left({\cal H}(t)+\xi_0^r\|u_t\|_2^r \|u\|_2^r\right).$$
We now set $q=2/r=2(1-\eta)$. Since $\eta<\frac 12-\frac 1p<\frac
12$ it follows that $q>1$. We can then apply Young's inequality with
exponents $q$ and $q'=\frac{1-\eta}{\frac 12-\eta}$ to get
$${\cal Z}^r(t)\le 2^{r-1}\left({\cal H}(t)+\xi_0^2\|u_t\|_2^2+ \|u\|_2^{\frac 1{\frac 12-\eta}}\right).$$
Now, since ${\frac 1{\frac 12-\eta}}<p$ a further application of
Young's inequality yields
$$\|u\|_2^{\frac 1{\frac 12-\eta}}\le
1+\|u\|_2^p$$ and then, as $\Omega$ is bounded and $\cal{H}(t)\ge
\cal{H}_0$, by H\"{o}lder inequality we get
\begin{equation}\label{XIII}
    {\cal Z}^r(t)\le C_8 \left({\cal H}(t)+\|u_t\|_2^2+ \|u\|_p^p\right)
\end{equation}
where
$C_8=C_8(\mu,m,p,c_1,c_3,K_0,\|\alpha\|_{\infty,\Gamma_1},\Omega,\Gamma_0,u_0,u_1)>0$.
By combining \eqref{XII} and \eqref{XIII}, as $r>1$, we get
$${\cal Z}'(t)\ge C_9 {\cal Z}^r(t)\qquad\text{for all $t\in
[0,\infty)$}$$ where
$C_9=C_9(\mu,m,p,c_1,c_3,K_0,\|\alpha\|_{\infty,\Gamma_1},\Omega,\Gamma_0,u_0,u_1)>0$.
Since $r>1$ this final estimate gives the desired contradiction.
\end{proof}

\section{Proof of Theorem~\ref{theorem1}}\label{section4}
\bigskip

This section is devoted to show that Theorem~\ref{theorem1} is a
simple corollary of Theorem~\ref{theorem 4}.  We first need to show
that, for problem~\eqref{2}, $E_1$ and $W$, as defined in
\eqref{defE1}--\eqref{defW}, are nothing but $d$ and $W_u$
(introduced in \eqref{d}--\eqref{Wu}). The proof is an adaptation of the proof of \cite[Lemma~4.1]{fisvit}.

\begin{lem} \label{NUOVO}
Suppose $f(x,u)=|u|^{p-2}u$, $2<p\leq 2^*$, $\sigma(\Gamma_{0})>0$.
Then $E_{1}=d$ and $W=W_u$.
\end{lem}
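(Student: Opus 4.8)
The plan is to establish the lemma in two independent steps: first the scalar identity $E_1=d$, and then the set equality $W=W_u$, which will follow once we know that, under the common constraint $E(u_0,u_1)<d$, the geometric condition $\|\nabla u_0\|_2>\lambda_1$ in \eqref{defW} is equivalent to the Nehari--type condition $K(u_0)\le 0$ in \eqref{Wu}.

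For the first step I would specialize $f(x,u)=|u|^{p-2}u$, so that the primitive is $F(x,u)=\tfrac1p|u|^p$ and hence $\int_\Omega F(\cdot,u)=\tfrac1p\|u\|_p^p$. Substituting this into \eqref{K0} and comparing with \eqref{Poincare} gives at once $K_0=\tfrac1p B_1^p$, whence \eqref{defE1} yields the explicit values $\lambda_1=B_1^{-p/(p-2)}$ and $E_1=\tfrac{p-2}{2p}B_1^{-2p/(p-2)}$. To compute $d$ from \eqref{d} I would analyze, for fixed $u\neq0$, the fibering map $\lambda\mapsto J(\lambda u)=\tfrac{\lambda^2}2\|\nabla u\|_2^2-\tfrac{\lambda^p}p\|u\|_p^p$. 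Since $p>2$, this map vanishes at $\lambda=0$, is positive for small $\lambda$, and tends to $-\infty$ as $\lambda\to\infty$, so it has a unique positive critical point $\lambda_*=(\|\nabla u\|_2^2/\|u\|_p^p)^{1/(p-2)}$, which is a maximum; a direct substitution gives $\sup_{\lambda>0}J(\lambda u)=\tfrac{p-2}{2p}(\|\nabla u\|_2/\|u\|_p)^{2p/(p-2)}$. Taking the infimum over $u\neq0$ then amounts to minimizing $\|\nabla u\|_2/\|u\|_p$, whose infimum is exactly $B_1^{-1}$ by \eqref{Poincare}; therefore $d=\tfrac{p-2}{2p}B_1^{-2p/(p-2)}=E_1$. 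This incidentally re-proves $d>0$, since $B_1<\infty$ because $p\le 2^*$.

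Having $E_1=d$, the two sets \eqref{defW} and \eqref{Wu} share the energy constraint, so it remains to prove the equivalence of the two remaining conditions whenever $E(u_0,u_1)<d$; note that then $J(u_0)\le E(u_0,u_1)<d$. The heart of the matter is the elementary observation that the sign of $K(u_0)=\|\nabla u_0\|_2^2-\|u_0\|_p^p$ decides on which side of $\tfrac{p-2}{2p}\|\nabla u_0\|_2^2$ the value $J(u_0)$ lies. Indeed, if $K(u_0)>0$ then $\|u_0\|_p^p<\|\nabla u_0\|_2^2$ and hence $J(u_0)>\tfrac{p-2}{2p}\|\nabla u_0\|_2^2$; combined with $\|\nabla u_0\|_2>\lambda_1$ this forces $J(u_0)>\tfrac{p-2}{2p}\lambda_1^2=E_1=d$, contradicting $J(u_0)<d$. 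This proves $\|\nabla u_0\|_2>\lambda_1\Rightarrow K(u_0)\le0$. Conversely, if $K(u_0)\le0$ and $u_0\neq0$, then $\|\nabla u_0\|_2^2\le\|u_0\|_p^p\le B_1^p\|\nabla u_0\|_2^p$ by \eqref{Poincare}, which upon dividing by $\|\nabla u_0\|_2^2>0$ gives $\|\nabla u_0\|_2\ge\lambda_1$; moreover the same chain shows that equality $\|\nabla u_0\|_2=\lambda_1$ would force simultaneously $K(u_0)=0$ and equality in the Sobolev inequality, hence $J(u_0)=E_1=d$, again contradicting $E(u_0,u_1)<d$. Thus $\|\nabla u_0\|_2>\lambda_1$, completing the equivalence and therefore the set equality.

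I expect the main obstacle to be the borderline analysis rather than the computation: both implications rely on upgrading non--strict to strict inequalities, and this is exactly where the strict energy gap $E(u_0,u_1)<d$ is indispensable. Two delicate points deserve attention. First, the substantive argument is genuinely about $u_0\neq0$: the only element with $K(u_0)=0$ and $J(u_0)<d$ is the origin, so \eqref{Wu} must be read with the nondegeneracy $u_0\neq0$ built into the ``bad part of the potential well'', which is also what makes the set compatible with the blow--up conclusion, a solution issuing from $u_0=0$ with small $u_1$ being global rather than blowing up. Second, in the critical case $p=2^*$ the constant $B_1$ need not be attained, but this is immaterial: every step uses only the \emph{value} of $B_1$ and proceeds by contradiction, so the possible non--existence of an extremal merely renders the equality case vacuous.
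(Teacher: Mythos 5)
Your proposal is correct and follows essentially the same route as the paper: the explicit computation $K_0=\tfrac1p B_1^p$, hence $\lambda_1=B_1^{-p/(p-2)}$ and $E_1=\bigl(\tfrac12-\tfrac1p\bigr)B_1^{-2p/(p-2)}$, the fibering-map evaluation of $d$ via the Sobolev constant, and the two inclusions by contradiction. The only cosmetic difference is that the paper rules out the borderline case $\|\nabla u_0\|_2=\lambda_1$ by means of the auxiliary function $h(\lambda)=\tfrac12\lambda^2-\tfrac1p B_1^p\lambda^p$ with $h(\lambda_1)=E_1$ and $J(v)\ge h(\|\nabla v\|_2)$, whereas you trace the equality case of the Sobolev inequality directly; your explicit remark that the nondegeneracy $u_0\neq0$ must be read into the definition of $W_u$ addresses a point the paper leaves implicit.
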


\begin{proof}
When $f(x,u)=|u|^{p-2}u$  we have $K_0=\frac 1p B_1^p$, hence
\begin{equation}\label{gho1}
\lambda_1=B_1^{\frac {-p}{p-2}}\quad\text{and}\quad E_1=\left(\frac
12-\frac 1p\right) B_1^{-2p/(p-2)}.
\end{equation}
An easy calculation shows that for any $u\in
H^{1}_{\Gamma_{0}}(\Omega)\setminus\{0\}$ we have
    $$\max\limits_{\lambda > 0}J(\lambda u)=J(\lambda(u)u)=\left(\frac{1}{2}-\frac{1}{p}\right)\left(\frac{\left\|\nabla u\right\|_{2}}{\left\|u\right\|_{p}}
    \right)^{2p/(p-2)}\negqquad\text{where}\quad
    \lambda(u)=\frac{\left\|\nabla
    u\right\|^{2/(p-2)}_{2}}{\left\|u\right\|^{p/(p-2)}_{p}}.$$
Hence, by \eqref{Poincare}, $d=\left(\frac 12-\frac 1p\right)
B_1^{-2p/(p-2)}$. Combing with \eqref{gho1} we have $d=E_1$.

In order to show that $W=W_u$ we first prove that $W\subseteq W_u$.
Let $(u_0,u_1)\in W$ and suppose, by contradiction, that $K(u_{0})>
0$. Hence $\left\|u_{0}\right\|^{p}_{p}<\left\|\nabla
u_{0}\right\|^{2}_{2}$ by \eqref{J}. Moreover, $J(u_{0})\le
E(u_0,u_1) < d =E_{1}$ and $\left\|\nabla u_{0}\right\|_{2}>
\lambda_{1}$. Then it follows that
    \[E_{1}>E(u_0,u_1)\ge J(u_0)>\left(\frac{1}{2}-\frac{1}{p}\right)\left\|\nabla u_{0}\right\|^{2}_{2} >\left(\frac{1}{2}-\frac{1}{p}\right)\lambda^{2}_{1},
\]
which contradicts \eqref{defE1}.

To prove that $W_u\subseteq W$, we take $(u_0,u_1)\in W_{u}$. We
note that, by \eqref{Poincare}, we have $J(v)\geq h(\|\nabla v\|_2)$
for all $v\in H^1_{\Gamma_0}(\Omega)$, where $h$ is defined by
$h(\lambda)=\frac 12 \lambda^2-\frac 1p B_1^p\lambda^p$ for
$\lambda\geq 0$. One easily verify that $h(\lambda_1)=E_1$. Then,
since $J(u_0)\le E(u_0,u_1)<E_1$, we have $\|\nabla
u_0\|_2\not=\lambda_1$. Moreover, since $K(u_0)\le 0$, by
\eqref{Poincare} we have
$$\|\nabla u_0\|_2^2\le \|u_0\|_p^p\le
B_1^p\|\nabla u_0\|_p^p$$ and consequently $\|\nabla u_0\|_2\ge
B_1^{-p/(p-2)}=\lambda_1$. Then $\|\nabla u_0\|_2>
B_1^{-p/(p-2)}=\lambda_1$,  concluding the proof.
\end{proof}

\begin{rem}\label{variational}  When $f(x,u)=|u|^{p-2}u$ $d$ is also equal to the Mountain Pass level
associated to the elliptic problem
$$\begin{cases} -\Delta u=|u|^{p-2}u \qquad &\text{in
$\Omega$,}\\
 u=0\qquad &\text{on $\Gamma_0$,}\\
 \partial_\nu u=0 \qquad &\text{on $\Gamma_1$,}
\end{cases}$$
that is $d=\inf\limits_{\gamma\in \Lambda}\sup\limits_{t\in
[0,1]}J(\gamma(t))$, where
$$\Lambda=\{\gamma \in C([0,1];H^1_{\Gamma_0}(\Omega)): \gamma(0)=0,\quad
J(\gamma(1))<0\}.$$ The proof of this remark was given in
\cite[Final Remarks]{poroso}.
\end{rem}

We can now give the
\begin{proof}
[\bf Proof of Theorem~\ref{theorem1}] By Remark~\ref{finalremark}
the nonlinearities involved in problem~\eqref{2} satisfy assumption
\assref{Q1}--\assref{Q3} and \fassref{F1}--\fassref{F3}, so we can
apply Theorem~\ref{theorem 4}. Due to Lemma~\ref{NUOVO} we get
exactly Theorem~\ref{theorem1}.
\end{proof}

\begin{thebibliography}{10}

\bibitem{autuoripucci}
G.~Autuori and P.~Pucci, \emph{Kirchhoff systems with nonlinear
source and
  boundary damping terms}, Commun. Pure Appl. Anal. \textbf{9} (2010), no.~5,
  1161--1188. \MR{2645989}

\bibitem{ball3}
J.~M. Ball, \emph{Finite time blow-up in nonlinear problems},
Nonlinear
  evolution equations, Proc. Symp., Madison/Wis., 1978, pp.~189--205.

\bibitem{bociulasieckaApplmat}
L.~Bociu and I.~Lasiecka, \emph{Blow-up of weak solutions for the
semilinear
  wave equations with nonlinear boundary and interior sources and damping},
  Appl. Math. (Warsaw) \textbf{35} (2008), no.~3, 281--304.

\bibitem{bociulasiecka2}
\bysame, \emph{Uniqueness of weak solutions for the semilinear wave
equations
  with supercritical boundary/interior sources and damping}, Discrete Contin.
  Dyn. Syst. \textbf{22} (2008), no.~4, 835--860.

\bibitem{bociulasiecka1}
\bysame, \emph{Local {H}adamard well-posedness for nonlinear wave
equations
  with supercritical sources and damping}, J. Differential Equations
  \textbf{249} (2010), no.~3, 654--683.

\bibitem{BRT}
L.~Bociu, M.~Rammaha, and D.~Toundykov, \emph{On a wave equation
with
  supercritical interior and boundary sources and damping terms}, Math. Nachr.
  \textbf{284} (2011), no.~16, 2032--2064.

\bibitem{BRTCOMP}
\bysame, \emph{Wave equations with super-critical interior and
boundary
  nonlinearities}, Math. Comput. Simulation \textbf{82} (2012), no.~6,
  1017--1029. \MR{2903342}

\bibitem{brezis2}
H.~Brezis, \emph{Functional analysis, {S}obolev spaces and partial
differential
  equations}, Universitext, Springer, New York, 2011.

\bibitem{CDCL}
M.~M. Cavalcanti, V.~N. Domingos~Cavalcanti, and I.~Lasiecka,
  \emph{Well-posedness and optimal decay rates for the wave equation with
  nonlinear boundary damping---source interaction}, J. Differential Equations
  \textbf{236} (2007), no.~2, 407--459.

\bibitem{CDCM}
M.~M. Cavalcanti, V.~N. Domingos~Cavalcanti, and P.~Martinez,
\emph{Existence
  and decay rate estimates for the wave equation with nonlinear boundary
  damping and source term}, J. Differential Equations \textbf{203} (2004),
  no.~1, 119--158.

\bibitem{cavalcantisoriano}
M.~M. Cavalcanti, V.~N. Domingos~Cavalcanti, and J.~A. Soriano,
\emph{Global
  solvability and asymptotic stability for the wave equation with nonlinear
  boundary damping and source term}, Contributions to nonlinear analysis,
  Progr. Nonlinear Differential Equations Appl., vol.~66, Birkh\"auser, Basel,
  2006, pp.~161--184.

\bibitem{chen2}
G.~Chen, \emph{Control and stabilization for the wave equation in a
bounded
  domain}, SIAM J. Control Optim. \textbf{17} (1979), 66--81.

\bibitem{chen4}
\bysame, \emph{Energy decay estimates and exact bounday value
controllability
  for the wave equation in a bounded domain}, J. Math. Pures Appl. \textbf{58}
  (1979), 249--273.

\bibitem{chen3}
\bysame, \emph{Control and stabilization for the wave equation in a
bounded
  domain, part {II}}, SIAM J. Control Optim. \textbf{19} (1981), 114--122.

\bibitem{chen1}
\bysame, \emph{A note on the boundary stabilization of the wave
equation}, SIAM
  J. Control Optim. \textbf{19} (1981), no.~1, 106--113.

\bibitem{chueshovellerlasiecka}
I.~Chueshov, M.~Eller, and I.~Lasiecka, \emph{On the attractor for a
semilinear
  wave equation with critical exponent and nonlinear boundary dissipation},
  Comm. Partial Differential Equations \textbf{27} (2002), no.~9-10,
  1901--1951.

\bibitem{CLT}
I.~Chueshov, I.~Lasiecka, and D.~Toundykov, \emph{Global attractor
for a wave
  equation with nonlinear localized boundary damping and a source term of
  critical exponent}, J. Dynam. Differential Equations \textbf{21} (2009),
  no.~2, 269--314.

\bibitem{levicivita}
T.~Levi Civita and U.~Amaldi, \emph{Lezioni di meccanica razionale},
vol.~2,
  Zanichelli Editore, Bologna, 1929, second ed.

\bibitem{FLZ}
H.~Feng, S.~Li, and X.~Zhi, \emph{Blow-up solutions for a nonlinear
wave
  equation with boundary damping and interior source}, Nonlinear Anal.
  \textbf{75} (2012), no.~4, 2273--2280.

\bibitem{fisvit}
A.~Fiscella and E.~Vitillaro, \emph{Local {Hadamard} well-posedness
and blow-up
  for reaction-diffusion equations with non-linear dynamical boundary
  conditions}, Discrete Contin. Dyn. Syst. \textbf{33} (2013), 5015--5047.

\bibitem{georgiev}
V.~Georgiev and G.~Todorova, \emph{Existence of a solution of the
wave equation
  with nonlinear damping and source terms}, J. Differential Equations
  \textbf{109} (1994), no.~2, 295--308.

\bibitem{gerbi}
S.~Gerbi and B.~Said-Houari, \emph{Local existence and exponential
growth for a
  semilinear damped wave equation with dynamic boundary conditions}, Adv.
  Differential Equations \textbf{13} (2008), no.~11-12, 1051--1074. \MR{2483130
  (2010f:35260)}

\bibitem{glassey}
R.~T. Glassey, \emph{Blow--up theorems for nonlinear wave
equations}, Math. Z.
  \textbf{132} (1973), 183--203.

\bibitem{grisvard}
P.~Grisvard, \emph{Elliptic problems in nonsmooth domains},
Monographs and
  Studies in Mathematics, vol.~24, Pitman (Advanced Publishing Program),
  Boston, MA, 1985.

\bibitem{Ha}
T.~G. Ha, \emph{Blow-up for semilinear wave equation with boundary
damping and
  source terms}, J. Math. Anal. Appl. \textbf{390} (2012), no.~1, 328--334.

\bibitem{jorgens}
K.~Jorgens, \emph{Nonlinear wave equations}, Lectures Notes,
University of
  Colorado, 1970.

\bibitem{keller}
J.~B. Keller, \emph{On solutions of nonlinear wave equations}, Comm.
Pure Appl.
  Math. \textbf{10} (1957), 523--530.

\bibitem{klp}
R.~J. Knops, H.~A. Levine, and L.~E. Payne, \emph{Nonexistence,
instability,
  and growth theorems for solutions of a class of abstract nonlinear equations
  with applications to nonlinear elastodynamic}, Arch. Rational Mech. Anal.
  \textbf{55} (1974), no.~1, 52--72.

\bibitem{lagnese2}
J.~Lagnese, \emph{Note on boundary stabilization of wave equations},
SIAM J.
  Control Optim. \textbf{26} (1988), 1250--1256.

\bibitem{lasiecka1}
I.~Lasiecka, \emph{Stabilization of hyperbolic and parabolic systems
with
  nonlinearly perturbed boundary conditions}, J. Differential Equations
  \textbf{75} (1988), no.~1, 53--87. \MR{957008 (90k:35025)}

\bibitem{bociulasiecakproc}
I.~Lasiecka and L.~Bociu, \emph{Well-posedness and blow-up of
solutions to wave
  equations with supercritical boundary sources and boundary damping},
  Differential \& difference equations and applications, Hindawi Publ. Corp.,
  New York, 2006, pp.~635--643.

\bibitem{lastat}
I.~Lasiecka and D.~Tataru, \emph{Uniform boundary stabilization of
semilinear
  wave equations with nonlinear boundary damping}, Differential Integral
  Equation \textbf{6} (1993), no.~3, 507--533.

\bibitem{lasieckatriggiani2}
I.~Lasiecka and R.~Triggiani, \emph{Sharp regularity theory for
second order
  hyperbolic equations of {N}eumann type. {I}. {$L_2$} nonhomogeneous data},
  Ann. Mat. Pura Appl. (4) \textbf{157} (1990), 285--367.

\bibitem{lasieckatriggiani3}
\bysame, \emph{Regularity theory of hyperbolic equations with
nonhomogeneous
  {N}eumann boundary conditions. {II}. {G}eneral boundary data}, J.
  Differential Equations \textbf{94} (1991), no.~1, 112--164.

\bibitem{lasieckatriggiani}
\bysame, \emph{Control theory for partial differential equations:
continuous
  and approximation theories. {II}}, Encyclopedia of Mathematics and its
  Applications, vol.~75, Cambridge University Press, Cambridge, 2000, Abstract
  hyperbolic-like systems over a finite time horizon.

\bibitem{levine4}
H.~Levine, \emph{Nonexistence of global solutions of nonlinear wave
equations},
  Improperly posed boundary value problems (London), Pitman, 1975, pp.~94--104.

\bibitem{levine2}
H.~A. Levine, \emph{Some additional remarks on the nonexistence of
global
  solutions to nonlinear wave equations}, SIAM J. Math. Anal. \textbf{5}
  (1974), 138--146.

\bibitem{levpayne1}
H.~A. Levine and L.~E. Payne, \emph{Nonexistence theorems for the
heat equation
  with nonlinear boundary conditions and for the porous medium equation
  backward in time}, J. Differential Equations \textbf{16} (1974), 319--334.
  \MR{0470481 (57 \#10235)}

\bibitem{levpayne2}
\bysame, \emph{Some nonexistence theorems for initial-boundary value
problems
  with nonlinear boundary constraints}, Proc. Amer. Math. Soc. \textbf{46}
  (1974), 277--284. \MR{0364841 (51 \#1095)}

\bibitem{levserr}
H.~A. Levine and J.~Serrin, \emph{Global nonexistence theorems for
quasilinear
  evolution equations with dissipation}, Arch. Rational Mech. Anal.
  \textbf{137} (1997), no.~4, 341--361.

\bibitem{mm}
M.~Marcus and V.~J. Mizel, \emph{Absolute continuity on tracks and
mappings of
  {S}obolev spaces}, Arch. Rational Mech. Anal. \textbf{45} (1972), 294--320.
  \MR{0338765 (49 \#3529)}

\bibitem{PH}
J.~Y. Park and T.~G. Ha, \emph{Energy decay for nondissipative
distributed
  systems with boundary damping and source term}, Nonlinear Anal. \textbf{70}
  (2009), no.~6, 2416--2434.

\bibitem{phy}
J.~Y. Park, T.~G. Ha, and Y.~H. Kang, \emph{Energy decay rates for
solutions of
  the wave equation with boundary damping and source term}, Z. Angew. Math.
  Phys. \textbf{61} (2010), no.~2, 235--265.

\bibitem{paynesattinger}
L.~E. Payne and D.~H. Sattinger, \emph{Saddle points and instability
of
  nonlinear hyperbolic equations}, Israel J. Math. \textbf{22} (1975),
  273--303.

\bibitem{ps:private}
P.~Pucci and J.~Serrin, \emph{Global nonexistence for abstract
evolution
  equations with positive initial energy}, J. Differential Equations
  \textbf{150} (1998), no.~1, 203--214.

\bibitem{tataru}
D.~Tataru, \emph{On the regularity of boundary traces for the wave
equation},
  Ann. Scuola Norm. Sup. Pisa Cl. Sci. (4) \textbf{26} (1998), no.~1, 185--206.

\bibitem{tsutsumi}
M.~Tsutsumi, \emph{On solutions of semilinear differential equations
in a
  {H}ilbert space}, Math. Japon. \textbf{17} (1972), 173--193.

\bibitem{vazvit2}
J.~L. Vazquez and E.~Vitillaro, \emph{Heat equation with dynamical
boundary
  conditions of locally reactive type}, Semigroup Forum \textbf{74} (2007),
  no.~1, 1--40.

\bibitem{vazvit}
\bysame, \emph{Heat equation with dynamical boundary conditions of
reactive
  type}, Comm. Partial Differential Equations \textbf{33} (2008), no.~4-6,
  561--612.

\bibitem{vazvitHLB}
\bysame, \emph{Heat equation with dynamical boundary conditions of
  reactive-diffusive type}, J. Differential Equations \textbf{250} (2011),
  no.~4, 2143--2161.

\bibitem{blowup}
E.~Vitillaro, \emph{Global nonexistence theorems for a class of
evolution
  equations with dissipation and application}, Arch. Rational Mech. Anal.
  \textbf{149} (1999), 155--182.

\bibitem{poroso}
\bysame, \emph{Blow-up for the porous media equation with source
term and
  positive initial energy}, J. Math. Anal. Appl. \textbf{247} (2000), no.~1,
  183--197.

\bibitem{rendiconti}
\bysame, \emph{Some new results on global nonexistence and blow-up
for
  evolution problems with positive initial energy}, Rend. Istit. Mat. Univ.
  Trieste \textbf{31} (2000), no.~suppl. 2, 245--275.

\bibitem{global}
\bysame, \emph{Global existence for the wave equation with nonlinear
boundary
  damping and source terms}, J. Differential Equations \textbf{186} (2002),
  no.~1, 259--298.

\bibitem{stable}
\bysame, \emph{A potential well theory for the wave equation with
nonlinear
  source and boundary damping terms}, Glasg. Math. J. \textbf{44} (2002),
  no.~3, 375--395. \MR{1956547 (2003k:35169)}

\bibitem{ziemer}
W.~P. Ziemer, \emph{Weakly differentiable functions}, Graduate Texts
in
  Mathematics, vol. 120, Springer-Verlag, New York, 1989, Sobolev spaces and
  functions of bounded variation. \MR{1014685 (91e:46046)}

\bibitem{zuazua}
E.~Zuazua, \emph{Uniform stabilization of the wave equation by
nonlinear
  boundary feedback}, SIAM J. Control Optim. \textbf{28} (1990), 466--477.

\end{thebibliography}
\def\cprime{$'$}
\providecommand{\bysame}{\leavevmode\hbox
to3em{\hrulefill}\thinspace}
\providecommand{\MR}{\relax\ifhmode\unskip\space\fi MR }
\providecommand{\MRhref}[2]{%
  \href{http://www.ams.org/mathscinet-getitem?mr=#1}{#2}
} \providecommand{\href}[2]{#2}

\end{document}